\newtheorem{theorem}{Theorem}[section]
\newtheorem{lemma}[theorem]{Lemma}
\newcommand{\R}{{\mathbb R}}
\newcommand{\N}{{\mathbb N}}
\newcommand{\E}{\mathbb{E}}
\newcommand{\PP}{\mathbb{P}}
\newcommand{\HH}{{\mathcal H}}
\numberwithin{equation}{section}
\begin{document}

\title[Intrinsic volumes of random polytopes]
{Intrinsic volumes of random polytopes with vertices on the
boundary of a convex body}

\author[K.\ B\"or\"oczky]{K\'aroly J. B\"or\"oczky}
\address{Alfr\'ed R\'enyi Institute of Mathematics, 
Hungarian Academy of Sciences, 
 PO Box 127, H--1364 Budapest, Hungary 
and 
Department of Geometry, Roland E\"otv\"os University, Budapest, Hungary
}
\curraddr{Universitat Polit\'ecnica de Catalunya, BarcelonaTech, Spain}
\email{carlos@renyi.hu}
\thanks{The first author was supported by
OTKA grant 75016, and by the EU Marie Curie FP7 IEF grant GEOSUMSETS}

%    author two information
\author[F.\ Fodor]{Ferenc Fodor}
\address{Department of Geometry, University of Szeged, 
Aradi v\'ertan\'uk tere 1, H-6720 Szeged, Hungary and Department of Mathematics and Statistics, University of Calgary, 2500 
University Dr. N.W., Calgary, Alberta, Canada, T2N 1N4
}
\curraddr{}
\email{fodorf@math.u-szeged.hu}
\thanks{The second author was supported by Hungarian OTKA grants 68398 and 75016 and
by the J\'anos Bolyai Research Scholarship of the Hungarian Academy
of Sciences.}

%    author three information
\author[D.\ Hug]{Daniel Hug}
\address{Karlsruhe Institute of Technology,  
Department of Mathematics,  
D-76128 Karlsruhe, 
Germany}
\curraddr{}
\email{daniel.hug@kit.edu}
\thanks{}

%    \subjclass is required.
\subjclass[2010]{Primary 52A22, Secondary 60D05, 52A27.}

\keywords{Random polytope, random approximation, intrinsic volume, rolling ball condition}

\date{\today}

\dedicatory{}

%    Abstract is required.
\begin{abstract}
Let $K$ be a convex body in $\R^d$, let $j\in\{1, \ldots, d-1\}$, and let
$\varrho $ be a positive and continuous probability
density function with respect to the $(d-1)$-dimensional Hausdorff 
measure on the boundary $\partial K$ of $K$.
Denote by $K_n$ the convex hull of $n$ points chosen randomly and
independently from $\partial K$ according to the probability distribution
determined by $\varrho$.
For the case when $\partial K$ is a $C^2$ submanifold of $\R^d$ with 
everywhere positive Gauss curvature, M. Reitzner proved an
asymptotic formula
for the expectation of the difference of the $j$th intrinsic volumes
of $K$ and $K_n$, as $n\to \infty$.
In this article, we extend this result to the case when the only
condition on $K$ is that a ball rolls freely in $K$. 
\end{abstract}

\maketitle

\section{Introduction}

Random polytopes in Euclidean space $\R^d$ can be defined 
in various ways. If $x_1,\ldots,x_n$ are $n$ random points 
sampled from a given convex body $K\subset\R^d$, then the 
convex hull of these random points yields a random polytope 
that has been studied extensively in the literature. The present 
focus is on a related though different model of a random polytope 
that has not been explored to the same extent. Instead of choosing 
the points from all of $K$, we sample random points from the boundary 
of $K$. The convex hull of these points then provides a model 
of a random polytope that will be considered here.
Our main focus is on the convergence of the expectation of geometric 
functionals (intrinsic volumes) of such a random polytope. 
The main result, stated in 
Theorem \ref{main}, extends previous work by relaxing the 
regularity assumptions on $K$. This is a nontrivial task, since 
the speed of convergence depends in a crucial way on the boundary 
structure, in particular on the (generalized) curvatures, of $K$. 
The present approach refines   
arguments that have recently been developed in \cite{BFH} 
to establish first order results for the aforementioned 
model of a random polytope, and it combines geometric and probabilistic ideas.

Before stating our results explicitly, we provide the required background 
and notation. Our basic setting is  
the $d$-dimensional Euclidean space $\R^d$, $d \ge 2$, 
with scalar product $\langle\cdot,\cdot\rangle$ and norm $\|\cdot\|$. 
By $\mathcal{H}^{j}$ we denote the $j$-dimensional Hausdorff measure, 
where $\mathcal{H}^{d}$ is simply called the volume 
$V_d$. Let $B^j$ be the unit ball of ${\mathbb R}^j$ with center at the 
origin, and let $S^{j-1}$ be its boundary. Then we write 
$\alpha_j=\mathcal{H}^{j}(B^j)$ for the $j$-dimensional volume of $B^j$, 
and hence $\mathcal{H}^{j-1}(S^{j-1})=j\alpha_j$ is the surface content of $B^j$. 
The relative boundary of a compact convex set $C\subset\R^d$ 
is denoted by $\partial C$. Finally, the convex hull of
subsets $X_1,\ldots,X_r$ and points $z_1,\ldots,z_s$ is denoted by
$[X_1,\ldots,X_r,z_1,\ldots,z_s]$. 

Throughout the following, $K$ is a convex body (compact convex set) with interior
points in $\R^d$; for notions of convexity we follow the 
monographs by Schneider \cite{Sch93} or  Gruber \cite{Gru07}. The boundary 
of $K$ is denoted by $\partial K$. 
We say that $\partial K$ is twice differentiable in the generalized sense at
a boundary point $x\in\partial K$ if there exists a positive semi-definite 
quadratic form $Q$ on $\R^{d-1}$, the so called second fundamental form,
with the following property: If $K$ is positioned in such a way that $x=o$
and $\R^{d-1}$ is a support hyperplane of $K$,  then in a neighborhood
of the origin $o$,  $\partial K$ is the graph of a convex function $f$ defined on a
$(d-1)$-dimensional ball around $o$ in  $\R^{d-1}$ satisfying
\begin{equation}
\label{diff}
f(z)=\mbox{$\frac12$}\,Q(z)+o(\|z\|^2),
\end{equation}
as $z\to o$.
According to a classical result of Alexandrov
(see P.M. Gruber \cite{Gru07} or R. Schneider \cite{Sch93}),
 $\partial K$ is twice differentiable in the generalized
sense at $\HH^{d-1}$ almost all points $x\in\partial K$. 
Such boundary points are also called normal boundary points. 
We write $k_1(x), \ldots , k_{d-1}(x)$ for the (generalized) principal
curvatures of $\partial K$ at $x\in \partial K$, which are just the eigenvalues of $Q$.
Furthermore, $H_j(x)$ denotes
the normalized $j$th elementary symmetric function of the principal
curvatures of $\partial K$ at the normal boundary point $x$. Here the dependence 
of this function on $K$ is not made explicit. Thus, for $j\in\{1,\ldots,d-1\}$, we have 
$$
H_j(x)=\binom{d-1}{ j}^{-1}
\sum_{1\leq i_1< \cdots< i_j\leq d-1}k_{i_1}(x)\cdots k_{i_j}(x),
$$
and this definition is supplemented by $H_0(x):=1$. 
In particular, $H_{d-1}(x)$ is the Gaussian curvature and $H_1(x)$ is
the mean curvature of $\partial K$ at $x$.
We say that $\partial K$ is $C^k_+$,
for some $k\geq 2$, if $\partial K$ is a $C^k$ submanifold of $\R^d$ and its
Gaussian curvature is positive everywhere.

The intrinsic volumes $V_j(K)$, $j=0, \ldots , d$, of a convex body $K\subset \R^d$
can be introduced as coefficients of the Steiner formula
$$V_d(K+\lambda B^d)=\sum_{j=0}^{d}\lambda^{d-j}\alpha_{d-j}V_j(K),$$
where $K+\lambda B^d$ is the Minkowski sum of $K$ and the ball $\lambda B^d$
of radius $\lambda\geq 0$. In particular, $V_d$ is the volume
functional, $V_0(K)=1$, $V_1$ is proportional to the mean width and
$V_{d-1}$ is a multiple of the surface area. Alternately, 
intrinsic volumes can be obtained as mean projection volumes.
Specifically, for $j=1, \ldots d-1$, it is well-known that
$$
V_j(K)=\frac{\binom{d}{ j}\alpha_d}{\alpha_j\alpha_{d-j}}
\int_{{\mathcal L}_j^d}V_j(K| L)\,\nu_j(dL),
$$
where ${\mathcal L}_j^d$ is the Grassmannian of all $j$-dimensional
linear subspaces of $\R^d$ equipped with the (unique) Haar probability
measure $\nu_j$ and, for $L\in {\mathcal L}_j^d$, $K| L$ denotes the
orthogonal projection of $K$ onto $L$. Here, $V_j(K| L)$ is just
the $j$-dimensional volume (Lebesgue measure) of $K| L$.

We say that a ball rolls freely in a convex body $K\subset\R^d$ if there
exists some $r>0$ such that any $x\in \partial K$ lies on the boundary
of some Euclidean ball $B$ of radius $r$ with $B\subset K$. The existence of
a rolling ball is equivalent to saying that the exterior unit normal
is a Lipschitz map on $\partial K$ (see D. Hug \cite{Hug2000}). In particular,
W. Blaschke observed that if $\partial K$ is $C^2$, then $K$ has a
rolling ball (see D. Hug \cite{Hug2000} or K. Leichtweiss \cite{L1998}). 
In turn, we say that $K$ rolls freely in a ball of radius $R>0$ if
any $x\in \partial K$  lies on the boundary
of some Euclidean ball $B$ of radius $R$ with $K\subset B$.

In this paper, we shall consider the following probability model.
Let $K$ be a convex body with a rolling ball of radius $r$. Let
$\varrho$ be a continuous, positive probability density function
defined on $\partial K$; throughout this paper  
this density is always considered with respect to the 
boundary measure on $\partial K$. Select the points $x_1, \ldots , x_n$
randomly and independently from $\partial K$ according to the
probability distribution determined by $\varrho$. The convex hull
$K_n:=[x_1, \ldots , x_n]$ then is a random polytope inscribed in $K$.
We are going to study the expectation of intrinsic volumes of
$K_n$. In order to indicate the dependence on the probability density $\varrho$, 
we write $\PP_\varrho$ to denote the probability
of an event in this probability space and $\E_\varrho$ to denote the expected value. 
For a convex body $K$,
the expected value $\E_\varrho(V_j(K_n))$ of the $j$-th intrinsic volume of
$K_n$ tends to $V_j(K)$ as $n$ tends to infinity. It is clear that the 
asymptotic behavior of
$V_j(K)-\E_\varrho(V_j(K_n))$ is determined by the shape of
the boundary of $K$. In the case when the boundary of $K$ is a $C_+^2$ submanifold of 
$\R^d$, this asymptotic behavior was described
by M. Reitzner \cite{Reitz2002}.

\begin{theorem}[Reitzner, 2002]
\label{reitznertheo}
Let $K$ be a convex body in $\R^d$ with $C^2_+$ boundary,
and let $\varrho$ be a continuous, positive probability
density function on $\partial K$. Denote
by $\E_\varrho (V_j(K_n)), j=1, \ldots, d$, the expected $j$-th intrinsic volume
of the convex hull of $n$ random points on $\partial K$ chosen
independently and according to the density function $\varrho$. Then
\begin{equation}
\label{reitzner}
V_j(K)-\E_\varrho(V_j(K_n))\sim
c^{(j, d)}\int_{\partial K}\varrho(x)^{-\frac{2}{d-1}}
H_{d-1}(x)^{\frac{1}{d-1}}H_{d-j}(x)\, \HH^{d-1}(dx)\cdot n^{-\frac{2}{d-1}}
\end{equation}
as $n\to\infty$, where the constant $c^{(j, d)}$ only depends on $j$ and
the dimension $d$.
\end{theorem}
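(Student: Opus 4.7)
I would reduce to a volume-difference problem via the projection formula for $V_j$ stated in the introduction, and then carry out a local paraboloid analysis near each normal boundary point. First, I would write
\begin{equation*}
V_j(K) - \E_\varrho V_j(K_n) = \frac{\binom{d}{j}\alpha_d}{\alpha_j\alpha_{d-j}}\int_{\mathcal{L}_j^d}\bigl[V_j(K|L) - \E_\varrho V_j(K_n|L)\bigr]\, \nu_j(dL).
\end{equation*}
Since each projected polytope $K_n|L \subset K|L$ lies in the $j$-dimensional ambient space $L$, an Efron-type identity applied in $L$ yields
\begin{equation*}
V_j(K|L) - \E_\varrho V_j(K_n|L) = \int_{K|L}\PP_\varrho\bigl[y \notin K_n|L\bigr]\, \HH^j(dy),
\end{equation*}
where $\{y \notin K_n|L\}$ is the union, over unit vectors $u\in L$, of the events that no sample point $x_i$ falls in the cap $\{x\in K:\langle x,u\rangle \leq \langle y,u\rangle\}$.

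Next I would localize near the boundary. The integrand is non-negligible only when $y$ lies within distance of order $n^{-2/(d-1)}$ of $\partial(K|L)$; the contribution from the interior region decays super-polynomially in $n$. For such $y$, the relevant caps of $K$ sit near the two preimages $x^{\pm}(y)\in\partial K$ of $y$ under the orthogonal projection to $L$. Here the $C^2_+$ hypothesis is essential: it provides a uniform second-order approximation of $\partial K$ by its osculating paraboloid at $x^{\pm}(y)$, as in \eqref{diff}, with the $o(\|z\|^2)$ error uniform in the base point. I would then perform the anisotropic rescaling $z\mapsto n^{1/(d-1)}z$ in each local chart so that, after Poissonization, the rescaled point process converges to a Poisson process of intensity $\varrho(x^{\pm}(y))$ on a fixed paraboloid, and the rescaled cap-miss probability converges pointwise to an explicit limit depending only on $\varrho(x^{\pm}(y))$ and the principal curvatures there.

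The main analytic obstacle is organizing the dominated-convergence argument that passes the limit through these nested integrals. For the required majorant one exploits the $C^2_+$ assumption in the form of uniformly positive and uniformly bounded principal curvatures, so that cap masses are uniformly comparable to their paraboloid-model values and an integrable bound on $\PP_\varrho[y \notin K_n|L]$ in terms of the distance from $y$ to $\partial(K|L)$ becomes available. Once the limit has been taken inside the integrals, a change of variables replaces the outer integration over $\mathcal{L}_j^d$ and the inner boundary-integration in $K|L$ by a single integral over $\partial K$. Numerically, the subtlest point is identifying the correct curvature factor: the local cap computation contributes $\varrho(x)^{-2/(d-1)} H_{d-1}(x)^{1/(d-1)}$ together with a dimensional constant depending only on $j$ and $d$, while the averaging over $L\in\mathcal{L}_j^d$ (a Cauchy--Binet-type identity applied to the second fundamental form) produces precisely the elementary symmetric function $H_{d-j}(x)$, via the same mechanism by which $H_{d-j}$ arises in the classical integral-geometric formulas for $V_j(K)$.
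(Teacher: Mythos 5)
Note first that the paper does not actually reprove Theorem~\ref{reitznertheo}: it cites \cite{Reitz2002}, and in fact the argument of Sections~2--4 explicitly invokes the special case $K=B^d$ of \eqref{reitzner} in \eqref{BallJ}. Thus the paper's machinery is aimed at the more general rolling-ball case (Theorem~\ref{main}) and does not supply a self-contained proof of the statement you were asked to prove. Your plan, which computes the limiting constant directly via a paraboloid rescaling and Poissonization, would give an argument closer in spirit to Reitzner's original one.

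Your opening reductions coincide with the paper's. The first two displays of your plan are exactly \eqref{basic} (Kubota's formula followed by Fubini; what you call an ``Efron-type identity'' is just Fubini applied to $V_j(K|L)-V_j(K_n|L)=\HH^j\bigl((K|L)\setminus(K_n|L)\bigr)$---Efron's identity is a different statement about the expected number of vertices). The localization near $\partial(K|L)$ is Lemma~\ref{closetoK} (the paper cuts at $t(n)=n^{-1/(d-1)}$, but, as you note, the effective scale of the distance to $\partial(K|L)$ is $n^{-2/(d-1)}$). After this the routes diverge: the paper reparametrizes by the cap probability content \eqref{sdef}, shows one may pass to finitely many points in a nearby cap (Lemma~\ref{pointsneary}), and compares the resulting expressions to the corresponding ones for $B^d$ (Lemma~\ref{compareball}), so the unknown constant $c^{(j,d)}$ is never computed but matched to the ball case; you instead propose to identify the scaling limit as a Poisson model and read off the constant. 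Both are viable; the comparison-to-the-ball device avoids Poissonization/depoissonization and the explicit evaluation of the limiting integral, at the cost of relying on the ball case.

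Two points in your sketch need repair before they would constitute a proof. First, for $y\in\partial(K|L)$ or for $y$ close to $\partial(K|L)$, the preimage $(y+L^\perp)\cap\partial K$ is not a pair of points $x^\pm(y)$ when $j<d-1$---it is a $(d-j-1)$-dimensional sphere. What the argument actually uses (see the discussion preceding Lemma~\ref{curvature}) is that for $\nu_j$-a.e.\ $L$ and $\HH^{j-1}$-a.e.\ $y\in\partial(K|L)$ there is a \emph{unique} normal boundary point $x\in\partial K$ with $y=x|L$, $u(x)\in L$, and that only supporting hyperplanes of $K|L$ through $y_t$ coming from directions near $u(x)$ can separate $K_n$. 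Second, the factor $H_{d-j}(x)$ does not fall out of the local cap computation; it comes from the integral-geometric identity that converts $\int_{\mathcal{L}_j^d}\int_{\partial(K|L)}\cdots\,\HH^{j-1}(dy)\,\nu_j(dL)$ into $\tfrac{j\alpha_j}{d\alpha_d}\int_{\partial K}\cdots\,H_{d-j}(x)\,\HH^{d-1}(dx)$, stated as Lemma~\ref{curvature} and taken from \cite{BoHoHu2008}. Calling this a ``Cauchy--Binet-type identity'' correctly points at the underlying Jacobian mechanism, but you still need to state and prove (or cite) this coarea-type formula; without it the outer integral over $L$ cannot be collapsed.
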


For $j=d$, that is in the case of the volume functional,
C. Sch\"utt and E. Werner \cite{ScW03} extended \eqref{reitzner}
to any convex body $K$ such that
a ball of radius $r$ rolls freely in $K$ and, in addition, 
$K$ rolls freely in a ball of radius $R$, 
for some $R>r>0$. The latter assumption of $K$ rolling freely inside a ball 
implies a uniform positive lower bound for the principle curvatures of $\partial K$ 
whenever they exist. 
 They also calculated the constant $c^{(d,d)}$ explicitly, that is
$$
c^{(d, d)}=\frac{(d-1)^{\frac{d+1}{d-1}}\Gamma(d+1+\frac{2}{d-1})}
{2(d+1)![(d-1)\alpha_{d-1}]^{\frac{2}{d-1}}}.
$$
Moreover, C. Sch\"utt and E. Werner \cite{ScW03} showed that
for fixed $K$, the minimum of the integral expression
in \eqref{reitzner} is attained for the probability
density function
$$
\varrho_0(x)=\frac{H_{d-1}(x)^{\frac{1}{d+1}}}
{\int_{\partial K}H_{d-1}(x)^{\frac{1}{d+1}}\; \HH^{d-1}(dx)}.
$$

Our main goal is to extend Theorem~\ref{reitznertheo} to 
the case where $K$ is only assumed to have a rolling ball, for
all  $j=1,\ldots,d$. In particular, the Gauss curvature is allowed 
to be zero on a set of positive boundary measure. 
More explicitly, we shall prove

\begin{theorem}
\label{main}
The asymptotic formula \eqref{reitzner} holds if 
$K$ is a convex body in $\R^d$ in which a ball rolls freely.
\end{theorem}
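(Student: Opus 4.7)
The plan is to reduce Theorem~\ref{main} to a pointwise asymptotic on $\partial K$ and to apply dominated convergence, refining the scheme of \cite{BFH}. First I would derive an exact integral representation
\[
V_j(K)-\E_\varrho(V_j(K_n))=\int_{\partial K}\Phi_n(x)\,\HH^{d-1}(dx),
\]
where $\Phi_n(x)\geq 0$ is a local ``defect'' quantity. For $j=d$, this is essentially the Efron-type identity: $\Phi_n(x)$ is (proportional to) the $\varrho$-weighted probability that $x$ lies in the missed cap between $K_n$ and $\partial K$. For $j\in\{1,\ldots,d-1\}$ one combines this with the projection formula for $V_j$ recalled in the introduction, so that $\Phi_n$ incorporates integrals over $j$-dimensional linear subspaces. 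The cap-based machinery from \cite{BFH} — which already handles the first-order behavior in the present setting — supplies the framework for carrying out this reduction without a $C^2$ assumption.

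Second, at every normal boundary point $x$ with $H_{d-1}(x)>0$, I would compute the pointwise limit of $n^{2/(d-1)}\Phi_n(x)$ by the paraboloid rescaling familiar from the $C_+^2$ case. Placing $x$ at the origin with tangent hyperplane $\R^{d-1}$ and rescaling tangential variables by $n^{1/(d-1)}$ and normal variables by $n^{2/(d-1)}$, relation \eqref{diff} implies that $\partial K$ converges locally to the elliptic paraboloid $z\mapsto\tfrac12 Q(z)$, while the points $x_1,\ldots,x_n$, under the sampling density $\varrho$, converge to a Poisson process of constant intensity $\varrho(x)$ on this paraboloid. Reitzner's local computation then gives
\[
\lim_{n\to\infty}n^{\frac{2}{d-1}}\Phi_n(x)=c^{(j,d)}\,\varrho(x)^{-\frac{2}{d-1}}H_{d-1}(x)^{\frac{1}{d-1}}H_{d-j}(x).
\]
At normal points with $H_{d-1}(x)=0$ the right-hand side vanishes, and the same paraboloid analysis (now with $Q$ degenerate) shows that $n^{2/(d-1)}\Phi_n(x)\to 0$ as well, so the pointwise limit holds at $\HH^{d-1}$-almost every $x\in\partial K$ by Alexandrov's theorem.

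The main obstacle is producing an $\HH^{d-1}$-integrable majorant $\Psi$ on $\partial K$ with $n^{2/(d-1)}\Phi_n(x)\leq\Psi(x)$ for all $n$. Here the rolling ball hypothesis enters decisively. Since every $x\in\partial K$ lies on the boundary of a ball of radius $r$ contained in $K$, every cap of $K$ based at $x$ contains the corresponding cap of this inner ball at the same height. Standard cap estimates for a ball then give uniform lower bounds, depending only on $r$, for the volume, the $(d-1)$-dimensional boundary measure, and the $\varrho$-mass of caps of $K$ as functions of their height. These bounds translate into upper bounds on the ``no-point'' probabilities that appear in $\Phi_n$, and — combined with an economic cap covering of $\partial K$ drawn from \cite{BFH} — should yield the required $\Psi$, even on the ``flat'' parts of $\partial K$ where Reitzner's $C_+^2$ analysis is unavailable. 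The subtle point is that the rolling ball gives no lower curvature bound, so the argument must be organised so that only the upper cap-geometry of $K$ enters the majorization.

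Finally, combining the pointwise limit with the dominating function and applying Lebesgue's dominated convergence theorem yields
\[
\lim_{n\to\infty}n^{\frac{2}{d-1}}\bigl(V_j(K)-\E_\varrho(V_j(K_n))\bigr)=c^{(j,d)}\int_{\partial K}\varrho(x)^{-\frac{2}{d-1}}H_{d-1}(x)^{\frac{1}{d-1}}H_{d-j}(x)\,\HH^{d-1}(dx),
\]
which is precisely \eqref{reitzner}.
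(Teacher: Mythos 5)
Your high-level plan --- rewrite the expected defect via Kubota's projection formula, establish pointwise convergence at Alexandrov normal boundary points, produce an integrable majorant from the rolling-ball hypothesis, and conclude by dominated convergence --- matches the skeleton of the paper's argument. But the central step is dispatched by an appeal that does not exist, and the mechanism by which $H_{d-j}$ enters the answer is not accounted for.

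The crux of your sketch is the claim that, at a normal boundary point $x$ with $H_{d-1}(x)>0$, ``Reitzner's local computation then gives'' the limit $n^{2/(d-1)}\Phi_n(x)\to c^{(j,d)}\varrho(x)^{-2/(d-1)}H_{d-1}(x)^{1/(d-1)}H_{d-j}(x)$. This is precisely what must be proved, and it is not available off the shelf: Reitzner's computation is carried out under a global $C^2_+$ hypothesis, whereas at an isolated Alexandrov point of a general body one only has a second-order Taylor expansion in a weak sense, no control of the boundary away from $x$, and no uniform Poisson approximation. The paper does \emph{not} redo this local limit. Instead it (i) reparametrizes the cap depth $t$ by the $\varrho$-measure $s$ of the cap cut off, a substitution that extracts $\varrho(x)$ and $H_{d-1}(x)$ from the integrand (the passage from \eqref{Jdef} to \eqref{Js0}); (ii) proves a localization lemma (Lemma~\ref{pointsneary}) showing that after this rescaling only points in a cap of probability content $O(1/n)$ matter; (iii) shows by a delicate comparison of rescaled caps that the resulting limit coincides with the one for the Euclidean unit ball $B^d$ under the uniform density (Lemma~\ref{compareball}); and only then (iv) invokes Reitzner's theorem applied to $K=B^d$, where it is unconditionally valid, to identify the constant. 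Steps (i)--(iii) are the substance of the extension beyond $C^2_+$, and your plan simply does not contain them.

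A second gap is the factor $H_{d-j}$. Your proposed representation $\int_{\partial K}\Phi_n(x)\,\HH^{d-1}(dx)$ with a local $\Phi_n$ does not exist in a form where paraboloid rescaling alone produces $H_{d-j}(x)$; for $j<d$ this factor arises from an integral-geometric identity (Lemma~\ref{curvature}) converting the Kubota average over $\mathcal{L}^d_j$ and $\partial(K|L)$ into a boundary integral on $\partial K$ weighted by $H_{d-j}$, and one must first establish that the limit $J_\varrho(y,L)$ in \eqref{Jdef} depends only on the point $x(y)\in\partial K$ and not otherwise on $L$. Absorbing all of this silently into $\Phi_n$ and then claiming a local limit hides a nontrivial step. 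Likewise, your treatment of the case $H_{d-1}(x)=0$ (``the same paraboloid analysis \dots shows $n^{2/(d-1)}\Phi_n(x)\to0$'') needs a separate argument, since a degenerate $Q$ gives no limiting paraboloid picture; the paper handles this in Lemma~\ref{zerocurv} by a direct cap estimate using the flat direction to show the relevant probability decays faster than $n^{-2/(d-1)}$.
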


The present method of proof for Theorem \ref{main} is different from the one used by  
Reitzner \cite{Reitz2002} or Sch\"utt and Werner \cite{ScW03}. It is 
inspired by the arguments from our previous paper \cite{BFH} concerning 
random points chosen from a convex body, however, the case 
of random points chosen from the boundary is more delicate. 

Examples show that in general the condition that a ball rolls freely inside $K$ 
cannot be dropped in Theorem~\ref{main}. General bounds are provided in the 
following theorem.

\begin{theorem}
\label{meanwidthbounds}
Let $K$ be a convex body in $\R^d$, and let $\varrho$ be a continuous, positive 
probability density function on $\partial K$. Then 
there exist positive constants  
$c_1,c_2$, depending on $K$ and $\varrho$, such that
for any $n\geq d+1$, 
$$
c_1n^{-\frac2{d-1}}\leq
\E_\varrho (V_1(K)-V_1(K_{n}))\leq c_2n^{-\frac1{d-1}}.
$$
The lower bound is of
optimal order if $K$ has a rolling ball,
and the upper bound is of optimal order,
if $K$ is a polytope.
\end{theorem}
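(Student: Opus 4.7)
My plan is to start from the mean-width representation
$$V_1(K) - V_1(K_n) = \frac{1}{\alpha_{d-1}} \int_{S^{d-1}} [h_K(u) - h_{K_n}(u)] \, \mathcal{H}^{d-1}(du)$$
and, for each direction, use the identity
$$\E_\varrho [h_K(u) - h_{K_n}(u)] = \int_0^\infty (1 - p_u(t))^n \,dt, \qquad p_u(t) := \int_{\partial K \cap C_u(t)} \varrho \, d\mathcal{H}^{d-1},$$
where $C_u(t) = \{x \in K : \langle x, u\rangle \geq h_K(u) - t\}$ is the cap of depth $t$ in direction $u$. The entire problem then reduces to two-sided control of $p_u(t)$.

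For the upper bound I would use that $K$ has nonempty interior, hence contains some ball $B(x_0, r)$. The cone with apex $y_K(u)$ and base $B(x_0, r)$ lies in $K$, and its cross-section orthogonal to $u$ at depth $t$ is a $(d-1)$-ball of radius $\asymp t$; by the standard comparison of the boundary of a cap with its base, $\mathcal{H}^{d-1}(\partial K \cap C_u(t)) \geq c_K\, t^{d-1}$ uniformly in $u$. Hence $p_u(t) \geq c_K' t^{d-1}$, and $\int_0^\infty (1-p_u(t))^n dt \leq \int_0^\infty e^{-c_K' n t^{d-1}}dt = O(n^{-1/(d-1)})$. Integration over $S^{d-1}$ yields the upper bound.

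For the lower bound I would exploit the dual inclusion $K \subset B(c, R)$ in the circumscribed ball: every cap $C_u(t)$ is contained in the spherical cap $C_u^{B(c,R)}(t + s_u)$, where $s_u := h_{B(c,R)}(u) - h_K(u) \geq 0$, and spherical caps of $B(c,R)$ of height $\tau$ have diameter $\asymp \sqrt{R\tau}$. Combined with the universal estimate $\mathcal{H}^{d-1}(\partial K \cap B(p, \rho)) \leq d\alpha_d\, \rho^{d-1}$ (which follows from $V_{d-1}(K \cap B(p,\rho)) \leq V_{d-1}(B(p,\rho))$ for convex sets), this yields $p_u(t) \leq C\,(t + s_u)^{(d-1)/2}$. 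Taking a contact point $p_0 \in \partial K \cap \partial B(c, R)$ and its outer normal $u_0$, a Taylor expansion of support functions gives $s_u \leq \tfrac{R}{2}\theta^2$ for $u$ at angular distance $\theta$ from $u_0$, so with $t_n \asymp n^{-2/(d-1)}$ the bound $p_u(t) \leq 1/n$ holds for $t \leq t_n$ uniformly over a spherical cap of angular radius $\asymp t_n^{1/2}$ around $u_0$, giving per-direction deficit $\gtrsim n^{-2/(d-1)}$. The main obstacle I anticipate is that naively multiplying $t_n$ by the angular-cap $\mathcal{H}^{d-1}$-measure $\asymp n^{-1}$ produces only $n^{-(d+1)/(d-1)}$; to recover the claimed $n^{-2/(d-1)}$ I would swap the order of integration and, at each $t$, use that for almost every $u \in S^{d-1}$ the extreme face $F_u$ is a single exposed point so that $p_u(t) = O_u(t^{(d-1)/2})$ with the implicit constant integrable in $u$. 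This is where the proof requires genuine care.

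For the optimality assertions, the lower bound is of optimal order when $K$ admits a rolling ball by Theorem~\ref{main} applied with $j = 1$, which in fact gives the exact asymptotic $V_1(K) - \E_\varrho[V_1(K_n)] \sim c \cdot n^{-2/(d-1)}$. To see that the upper bound is of optimal order when $K$ is a polytope, I would carry out a vertex analysis: for each vertex $v$ of $K$, the event that no sample lands in a $\mathcal{H}^{d-1}$-mass-$1/n$ neighbourhood of $v$ on $\partial K$ has probability bounded below, and on this event $h_K(u) - h_{K_n}(u) \gtrsim n^{-1/(d-1)}$ for every $u$ in the normal cone of $K$ at $v$, which has positive $\mathcal{H}^{d-1}$-measure on $S^{d-1}$ precisely because $v$ is a vertex. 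Summing over the finitely many vertices gives the matching lower bound $\gtrsim n^{-1/(d-1)}$.
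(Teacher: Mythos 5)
Your overall strategy---the layer-cake (mean-width) representation
$\E_\varrho[h_K(u)-h_{K_n}(u)]=\int_0^\infty(1-p_u(t))^n\,dt$ combined with
two-sided control of the cap measure $p_u(t)$---is exactly the one the paper
uses. The upper bound via an interior cone giving $p_u(t)\gg t^{d-1}$ is
correct and matches the paper's estimate. The polytope-optimality argument
you sketch is a legitimate probabilistic alternative to the paper's route
(which instead deduces $p_u(t)\ll t^{d-1}$ for $u$ in the interior of a
vertex normal cone and reuses the same integral machinery); both give
$\gtrsim n^{-1/(d-1)}$.

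The genuine gap is in the lower bound. Having correctly diagnosed that a
single contact point with the circumscribed ball only yields
$n^{-(d+1)/(d-1)}$, you fix this by claiming that ``$F_u$ is a single
exposed point for a.e.\ $u$, so $p_u(t)=O_u(t^{(d-1)/2})$.'' That implication
is false: $F_u$ being a singleton is equivalent to first-order
differentiability of $h_K$ at $u$, and it gives no quantitative control on
the cap size. For instance, if $\partial K$ near $x(u)$ looks locally like
$z_d=\|z'\|^{\alpha}$ with $1<\alpha<2$, then $F_u=\{x(u)\}$ but
$p_u(t)\asymp t^{(d-1)/\alpha}\gg t^{(d-1)/2}$, so the required bound fails.
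What you actually need is \emph{second-order} differentiability of $h_K$ at
$u$, which holds for $\mathcal{H}^{d-1}$-a.e.\ $u\in S^{d-1}$ by
Alexandrov's theorem; this is precisely the content of the paper's Lemma
\ref{circumR}, which converts it into the inclusion $K\subset x(u)-R_uu+R_uB^d$.
Once you have $R_u<\infty$ a.e., the cleanest finish is not to demand
integrability of the constant but merely to pick a positive-measure
$\Sigma\subset S^{d-1}$ on which $R_u$ is uniformly bounded; then
$p_u(t)\le\gamma t^{(d-1)/2}$ on $\Sigma$ and the inner integral is
$\gtrsim n^{-2/(d-1)}$ uniformly on $\Sigma$, which is what the paper does.
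Finally, when invoking Theorem~\ref{main} for optimality under a rolling
ball, you should also note that the constant on the right of
\eqref{reitzner} is strictly positive; the paper justifies this via the
absolute continuity of the Gauss curvature measure for bodies with a rolling
ball, which forces $H_{d-1}>0$ on a set of positive boundary measure.
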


For comparison, let us review the main known results
about the convex hull $K(n)$ of $n$ points chosen randomly,
independently and uniformly from $K$. In the case where 
a ball rolls freely inside $K$, the analogue of Theorem~\ref{main} 
is established in K. B\"or\"oczky Jr., L. M. Hoffmann and D. Hug \cite{BoHoHu2008}.
 For the case of the volume functional and an arbitrary convex body $K$,
C. Sch\"utt \cite{Sch94} proved (see
K.J. B\"or\"oczky, F. Fodor, D. Hug \cite{BFH} for some corrections and an extension) 
that
$$
\lim_{n\to\infty}n^{\frac{2}{d+1}}(V_d(K)-\E(V_d(K(n)))=
c_d V_d(K)^{\frac{2}{d+1}}\int_{\partial K}H_{d-1}(x)^{\frac{1}{d+1}}\, \HH^{d-1}(dx),
$$
 where the constant $c_d>0$ only depends on the dimension $d$ and is explicitly known.
 Concerning the order of approximation, we have
 \begin{equation}
 \label{1}
\gamma_1 n^{-2/(d+1)} < V_1(K)-\E V_1(K(n)) < \gamma_2n^{-1/d},
\end{equation}
\begin{equation}
\label{2}
\gamma_3 n^{-1}\ln ^{d-1}n < V_d(K)-\E V_d(K(n)) < \gamma_4n^{-2/(d+1)},
\end{equation}
where $\gamma_1,\ldots,\gamma_4>0$ are constants that may depend on $K$. 
The inequalities (\ref{1}) are due to R. Schneider \cite{Sch87}, and (\ref{2}) is due to
 I. B\'{a}r\'{a}ny and D. Larman \cite{BL88}. The orders are best possible, being
 attained in (\ref{1})(left) and (\ref{2})(right) by sufficiently smooth bodies,
 and in (\ref{1})(right) and (\ref{2})(left) by polytopes.
 
 The proof of Theorem \ref{main} is given in the following three sections. 
 In Section 2, we rewrite the difference $V_j(K)-\E_\varrho(V_j(K_n))$ in 
 an integral geometric way. The inner integral involved in this integral 
 geometric description is extended over the projection $K|L$ of $K$ to $L$, where 
 $L$ is a $j$-dimensional linear subspace. Then we show that up to an error 
 term of lower order the main contribution 
 comes from a neighborhood of the (relative) boundary $\partial (K|L)$ of 
 $K|L$ with respect to $L$, where this neighborhood 
 is shrinking at a well-defined speed $t(n)$ as $n\to\infty$. Further application of 
 an integral geometric decomposition then shows that the proof boils down to 
 determining the limit
$$
\lim_{n\to\infty}
\int_0^{t(n)}n^{\frac{2}{d-1}}\langle y,u(y)
\rangle\PP_\varrho\left( y_t\notin K_n|L \right)\,dt,
$$
where $y\in\partial (K|L)$ and $x$ is a normal boundary point of $K$ with 
$y=x|L$. The case where the Gauss curvature of $K$ at $x$ is zero is 
treated directly. In Section 3, we deal with the case of positive 
Gauss curvature. In a first step, we choose a reparametrization of the integral 
which relates the parameter $t$ to the probability content $s$ of that part of the boundary 
of $K$ near $x$ that is cut off by a cap determined by the parameter $t$. This reparametrization 
has the effect of extracting the relevant geometric information from $K$.  
What remains to be shown is that the transformed integrals are essentially independent 
of $K$ and yield the same value for the unit ball with the uniform probability density 
on its boundary. 
This latter step is divided into two lemmas in Section 3. Whereas both lemmas have analogues in 
our previous work \cite{BFH}, the present arguments are more delicate and the second lemma 
has to be established by a reasoning different from the one in \cite{BFH}. The proof 
is then completed in Section 4, where, in addition to the previous steps,  
a very special case of Theorem \ref{reitznertheo} 
is employed ($K$ being the unit ball) as well as an integral geometric lemma 
from \cite{BoHoHu2008}. The final section is devoted to a proof of Theorem \ref{meanwidthbounds}.

\section{General Estimates}

In order to prove Theorem~\ref{main}, we start by rewriting 
$V_j(K)-\E_\varrho(V_j(K_n))$ in an integral geometric form. For this, 
we use Kubota's formula
and Fubini's theorem to obtain
\begin{eqnarray}
\label{basic}
&&V_j(K)-\E_\varrho(V_j(K_n))\nonumber\\
&&=\int_{\partial K}\ldots\int_{\partial K}
\left(V_j(K)-V_j(K_n)\right)
\prod_{i=1}^{n}\varrho(x_i)\,\HH^{d-1}(dx_1)\ldots
\HH^{d-1}(dx_n)\nonumber\\
&&=\frac{\binom{d}{ j}\alpha_d}{\alpha_j\alpha_{d-j}}
\int_{\partial K}\ldots\int_{\partial K}\int_{{\mathcal L}_j^d}
\left(V_j(K| L)-V_j(K_n| L)\right)\nonumber\\
&&\qquad\qquad\times\prod_{i=1}^{n}\varrho(x_i)\,
\nu_j(dL)\,\HH^{d-1}(dx_1)\ldots
\HH^{d-1}(dx_n)\nonumber\\
&&=\frac{\binom{d}{j}\alpha_d}{\alpha_j\alpha_{d-j}}\int_{{\mathcal L}_j^d}
\int_{L}\int_{\partial K}\ldots\int_{\partial K}
{\mathbf 1}\left\{y\in K| L \text{ and } y\not\in K_n|L\right\} \nonumber\\
&&\qquad\qquad\times
\prod_{i=1}^{n}\varrho(x_i)\,
\HH^{d-1}(dx_1)\ldots \HH^{d-1}(dx_n)\HH^{j}(dy)\,\nu_j(dL)\nonumber\\
&&=\frac{\binom{d}{ j}\alpha_d}{\alpha_j\alpha_{d-j}}
\int_{{\mathcal L}_j^d}\int_{K| L}\PP_\varrho(y\not\in K_n| L)
\,\HH^{j}(dy)\,\nu_j(dL).
\end{eqnarray}

Now we  introduce some geometric tools.
If $K$ has a rolling ball of radius $r$, then so does
$K| L$ for any $L\in {\mathcal L}_j^d$.
Furthermore, $\partial K$ has a unique
outer unit normal vector $u(x)$ at each boundary point $x\in\partial K$.
If $L\in {\mathcal L}_j^d$, $y\in\partial (K| L)$
and $x\in K$ such that $y=x| L$, then $x\in\partial K$ and
the outer unit normal of $\partial (K| L)$ at $y$ is equal
to $u(x)$.

Since the statement of the theorem is translation invariant,
we may assume
that
\begin{equation}
\label{rR}
rB^d\subset K\subset RB^d
\end{equation}
for some $R>0$. For $t\in(0,1)$, let
$K_t:=(1-t)K$, and for $x\in\partial K$, let $x_t:=(1-t)x$.
Similarly, $(K|L)_t:=(1-t)(K| L)$ and $y_t:=(1-t)y$ for
$y\in\partial (K|L)$. 

For $x\in\partial K$ and $t\in (0,1)$, let
$$
x^*_t:=x-\langle tx,u(x)\rangle u(x).
$$
If $t\in(0,\frac{r}R)$, then \eqref{rR} implies that
\begin{equation}\label{nach4}
tr\leq \langle x-x^*_t,u(x)\rangle=\langle x-x_t,u(x)\rangle<r.
\end{equation}
The existence of a rolling ball at $x$ yields that
if $t\in(0,\frac{r}R)$, then
\begin{equation}
\label{x*}
x^*_t+r\sqrt{t}(u(x)^\bot\cap B^d)\subset K.
\end{equation}
On the other hand, we have
\begin{equation}
\label{x*x}
\|x^*_t-x_t\|<Rt.
\end{equation}

For real functions $f$ and $g$ defined on the same space, we write
$f\ll g$ or $f=O(g)$ if there exists a positive constant $\gamma$, depending
only on $K$ and $\varrho$, such that
$|f|\leq \gamma\cdot g$.

We shall use the notion
of a ``coordinate corner''.
Given an orthonormal basis in a linear $i$-dimensional subspace $L$,
the corresponding $(i-1)$-dimensional coordinate planes cut $L$ into
$2^i$ convex cones, which we call coordinate corners (with respect to $L$
and the given basis). In the following, we write $\gamma_1,\gamma_2,\ldots $
for positive constants which merely depend on $K$ and $\varrho$.

Let us estimate the probability that $o\not\in K_n$.
There exists a constant
$\gamma_1>0$ such that the probability content of each
of the parts of $\partial K$
contained in one of the $2^d$ coordinate corners of $\R^d$ is at least
$\gamma_1$. Now if $o\not\in K_n$, then $o$ can be strictly separated
from $K_n$ by a hyperplane. It follows that
 $\{x_1,\ldots,x_n\}$ is disjoint from one of
these coordinate corners, and hence
\begin{equation}
\label{except}
\PP (o\notin K_n)\leq 2^d(1-\gamma_1)^n.
\end{equation}
This fact will be used, for instance, in the proof of the subsequent lemma. 
In the following, for $x\in\R^d$ we use the shorthand notation  
$\R_+x:=\{\lambda x:\lambda\ge 0\}$.

\begin{lemma}
\label{genup}
There exist constants $\delta,{\gamma}_2\in(0,1)$,
depending on $K$ and $\varrho$,
such that if $L\in {\mathcal L}_j^d$,
$y\in \partial (K| L)$ and $t\in (0,\delta)$, then
$$
\PP_\varrho\left(y_t\not\in K_n| L \right)\ll
\left(1-{\gamma}_2t^{\frac{d-1}2}\right)^n.
$$
\end{lemma}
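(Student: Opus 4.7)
\emph{Plan.} I follow the argument behind \eqref{except}, replacing the coordinate corners at $o$ by shifted corners at $y_t$ working inside $L$. By the separation theorem, $y_t\notin K_n|L$ forces the existence of a unit vector $v\in L$ with $\langle v,x_i\rangle\le\langle v,y_t\rangle$ for every $i=1,\dots,n$. Since $u(y)\in L$ by the preliminary observation, I pick an orthonormal basis $e_1,\dots,e_j$ of $L$ with $e_j=u(y)$, and for each $\sigma\in\{-1,+1\}^j$ introduce
$$
A_\sigma:=\{z\in\partial K:\sigma_k\langle e_k,z-y_t\rangle>0\text{ for every }k=1,\dots,j\}.
$$
A sign computation (as in the derivation of \eqref{except}, but centred at $y_t$) shows that if $v$ is perturbed so that $\langle e_k,v\rangle\ne 0$ for every $k$, and if $\sigma(v)_k:=\mathrm{sgn}\langle e_k,v\rangle$, then no $x_i$ can lie in $A_{\sigma(v)}$. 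A union bound then yields
$$
\PP_\varrho(y_t\notin K_n|L)\le 2^j\bigl(1-\min_\sigma\PP_\varrho(A_\sigma)\bigr)^n,
$$
so the task reduces to establishing $\PP_\varrho(A_\sigma)\ge\gamma_2\,t^{(d-1)/2}$ for every $\sigma$, uniformly in $t\in(0,\delta)$.

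If $\sigma_j=-1$, then as $t\to 0$ the set $A_\sigma$ contains a fixed open portion of $\partial K$ of positive $\HH^{d-1}$-measure independent of $t$, so by continuity its probability is bounded below by a positive constant, which majorizes $\gamma_2\,t^{(d-1)/2}$ once $\delta$ is chosen small. The main obstacle is the case $\sigma_j=+1$, where $A_\sigma$ degenerates as $t\to 0$ and the correct scaling $t^{(d-1)/2}$ has to be extracted. Here I would parametrize $\partial K$ near $x$ as $z=x+w-g(w)u(x)$ over $w\in u(x)^\perp$; the rolling ball of radius $r$ inside $K$ forces
$$
g(w)\le r-\sqrt{r^2-\|w\|^2}\le\|w\|^2/r\qquad\text{for }\|w\|\le r.
$$
Writing $w=\bar w+w^\perp$ with $\bar w\in u(x)^\perp\cap L$ and $w^\perp\in L^\perp$, and putting $h:=\langle u(y),y\rangle\ge r$ and $\bar y_k:=\langle e_k,y\rangle$, the membership $z\in A_\sigma$ becomes
$$
g(w)<th\quad\text{together with}\quad \sigma_k(\bar w_k+t\bar y_k)>0\text{ for }k=1,\dots,j-1.
$$

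The first inequality is implied by $\|w\|\le\sqrt{rht}$, and for $t$ small enough that $tR\le\tfrac12\sqrt{rht}$ the shift $t\bar y$ is dominated by this radius, so the admissible set in $w$-space has $(d-1)$-dimensional Lebesgue measure at least $\gamma_3(rht)^{(d-1)/2}$. Since the $\HH^{d-1}$-element on $\partial K$ dominates the horizontal Lebesgue element $\D w$, this yields $\HH^{d-1}(A_\sigma)\ge\gamma_3(rht)^{(d-1)/2}$; multiplication by $\min_{\partial K}\varrho>0$ gives the required probability lower bound. The chief difficulty is precisely this case $\sigma_j=+1$: the normal-direction threshold of the corner scales like $t$ while the tangential extent scales like $\sqrt t$, and one must translate the corner inequalities into the normal/tangential graph representation and invoke the quadratic rolling ball bound on $g$ to combine these and extract the surface exponent $(d-1)/2$.
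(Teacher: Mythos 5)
Your proposal is correct in outline but genuinely different from the paper's. The paper decomposes $\partial K$ into $2^{d-1}$ sets $\Theta_{i,t}=\partial K\cap(x_t+[\Theta'_i,\R_+x])$ using coordinate corners $\Theta'_i$ in the full tangent hyperplane $u(x)^\bot$, and then argues that if $y_t\notin K_n|L$ \emph{and} $o\in K_n$, the separating hyperplane $H$ (orthogonal to $L$, through $x_t$) has some $\Theta_{i,t}$ entirely on its far side; the case $o\notin K_n$ must then be handled separately via \eqref{except}. You instead use $2^j$ sign patterns in $L$ centred at $y_t$, which exactly mirrors the separation structure: the separating normal $v$ lives in $L$, and decomposing $v$ in the basis $e_1,\ldots,e_j$ of $L$ shows directly that no $x_i$ lies in $A_{\sigma(v)}$. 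Because the $A_\sigma$ partition $\partial K$ up to measure zero, you get the union bound in one step with no case split on $o\in K_n$ — a genuine simplification. Both approaches then reduce to the same kind of measure estimate, and you extract the $t^{(d-1)/2}$ scaling from the rolling-ball inequality $g(w)\le\|w\|^2/r$ exactly as the paper does (in disguise) via \eqref{x*}.

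Two points deserve attention. First, the threshold $tR\le\tfrac12\sqrt{rht}$ is not quite strong enough: with $\rho=\sqrt{rht}$, the $j-1$ halfspace constraints can each be offset by as much as $\rho/2$ in the worst case, and for $j-1\ge 4$ the box $\{\sigma_k\bar w_k>\rho/2\ (k<j)\}$ no longer fits inside $\|w\|\le\rho$. You should shrink $\delta$ further, e.g.\ require $tR\le\tfrac{1}{2\sqrt{d-1}}\sqrt{rht}$, after which a small cube of side $\sim\rho/\sqrt{d-1}$ fits and the measure bound $\gg\rho^{d-1}$ holds for all $j\le d-1$. Second, in the case $\sigma_j=-1$ you should say a word about uniformity: the claim that $A_\sigma$ contains a fixed open portion of $\partial K$ must hold with a lower bound on $\HH^{d-1}(A_\sigma)$ that is uniform in $x\in\partial K$, $L$, and $\sigma$; this does follow from compactness of $\partial K$ and of the Grassmannian together with continuity of the data, but it should be stated. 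Neither issue is fatal — both are routine fixes — so your argument stands as a correct and somewhat cleaner alternative to the paper's.
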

\begin{proof} Let $y\in \partial (K| L)$ and $x\in \partial K$ be
such that $y=x| L$.
Let $\Theta'_1,\ldots,\Theta'_{2^{d-1}}$ be the coordinate corners
with respect to some basis vectors
in $u(x)^\bot$. In addition, for $i=1,\ldots,2^{d-1}$
and $t\in(0,1)$, let
$$
\Theta_{i,t}=\partial K\cap\left(x_t+\left[\Theta'_i,\R_+x\right]
\right).
$$
Since $\varrho$ is positive and continuous, we have
$$
\int_{\Theta_{i,t}}\varrho(x)\,\HH^{d-1}(dx)\ge \gamma_3{\HH}^{d-1}(\Theta_{i,t}).
$$
If $y_t\not\in K_n| L$ and $o\in K_n$, then there exists a
$(j-1)$-dimensional affine plane $H_L$ in $L$ through $y_t$, bounding the
halfspaces $H_L^-$ and $H_L^+$ in $L$, for which
$K_n| L\subset H_L^-$. Now, if $L^\bot$ is the orthogonal complement
of $L$ in $\R^d$, then $H:=H_L+L^\bot$ is a
hyperplane in $\R^d$ with the property that $x_t\in H$
and $K_n\subset H^-:=H_L^-+L^\bot$.
Furthermore, $\Theta_{i,t}\subset H^+:=H_L^++L^\bot$
for some $i\in\{1,\ldots,2^{d-1}\}$, because $o\in K_n\subset H^-$.
 Therefore
$$
\PP_\varrho\left( y_t\not\in K_n| L , o\in K_n\right)\le
\sum_{i=1}^{2^{d-1}}
\left(1-\gamma_3 \HH^{d-1}(\Theta_{i,t})\right)^n.
$$
Combining  \eqref{x*} and \eqref{x*x}, we deduce the existence of 
a constant $\gamma_4>0$ such that
if $t\leq \gamma_4$, then the orthogonal projection
of $\Theta_{i,t}$ into $u(x)^\bot$
 contains a translate of $\Theta_i'\cap (r/2)\sqrt{t}B^d$, and therefore
$$\HH^{d-1}(\Theta_{i,t})\ge \gamma_5 t^{\frac{d-1}2}$$
for $i=1,\ldots,2^{d-1}$.
In turn, we obtain
\begin{equation}
\label{eqa}
\PP_\varrho\left( y_t\not\in K_n| L , o\in K_n\right)\ll
\left(1-\gamma_6 t^{\frac{d-1}2}\right)^n.
\end{equation}
On the other hand, if $o\not\in K_n|L$, then
\eqref{except} holds. Combining this with
(\ref{eqa}), we conclude the proof of the lemma.
\end{proof}

Subsequently, the estimate of Lemma \ref{genup} will be used, for instance, 
to restrict the
domain of integration (cf.\ Lemma \ref{closetoK}) and
to justify an application of Lebesgue's
dominated convergence theorem (see \eqref{limitform}).
For these applications,
we also need that
if $x\in\partial K$ and $c>0$ satisfies
$\bar\omega:=c\delta^{\frac{d-1}2}<1$,
then
\begin{equation}
 \label{Gamma}
\int_0^\delta\left(1-ct^{\frac{d-1}2}\right)^n\, dt
= c^{\frac{-2}{d-1}}\frac2{d-1}
\int_0^{\bar\omega}s^{\frac2{d-1}-1}(1-s)^n\,ds
\ll  c^{\frac{-2}{d-1}}\cdot n^{\frac{-2}{d-1}},
\end{equation}
where we use that $(1-s)^{n}\le e^{-ns}$ for $s\in[0,1]$ and $n\in\N$.

The next lemma will allow us to decompose integrals in a suitable way.
We write $u(y)$ to denote
the unique exterior unit normal to $\partial (K| L)$ at $y\in\partial (K| L)$.
It will always be clear from the context whether
we mean the exterior unit normal at a point $x\in\partial K$ or
at a point $y\in\partial (K| L)$.

\begin{lemma}
\label{integration}
If \, $0\le t_0 < t_1<\delta$ and
$h:K| L\to [0,\infty]$ is a measurable function, then
\begin{align*}
&\int_{(K| L)_{t_0}\setminus (K| L)_{t_1}}\mathbb{P}_\varrho
\left( x\notin K_n| L\right ) h(x) \, \HH^j(dx)\\
&\qquad\qquad=
\int_{\partial (K| L)}\int_{t_0}^{t_1}(1-t)^{j-1}\mathbb{P}_\varrho
\left( y_t\notin K_n| L \right)
\langle y,u(y)\rangle h(y_t)\, dt\, \HH^{j-1}(dy).
\end{align*}
\end{lemma}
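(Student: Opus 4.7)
The plan is to prove Lemma \ref{integration} by a polar-coordinate-style change of variables on the ``annular'' region $(K|L)_{t_0}\setminus (K|L)_{t_1}$. Since $rB^d\subset K$ implies $rB^j\subset K|L$ (as the projection of the ball onto $L\cong\R^j$ is again a ball), the origin lies in the relative interior of $K|L$, so $K|L$ is star-shaped with respect to $o$. Consequently, every $x\in (K|L)_{t_0}\setminus (K|L)_{t_1}$ can be written uniquely as $x=(1-t)y=y_t$ with $y\in\partial(K|L)$ and $t\in(t_0,t_1]$, yielding a bijection $\Phi\colon\partial(K|L)\times(t_0,t_1]\to (K|L)_{t_0}\setminus(K|L)_{t_1}$, $(y,t)\mapsto y_t$.

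Next I would compute the Jacobian of $\Phi$ at those $y$ where $\partial(K|L)$ is differentiable. Since $K$ has a rolling ball of radius $r$, so does $K|L$, hence the outer unit normal $u(y)$ to $\partial(K|L)$ exists and is Lipschitz on $\partial(K|L)$, so singular points form a set of $\HH^{j-1}$-measure zero. Pick an orthonormal basis $v_1,\dots,v_{j-1}$ of $T_y\partial(K|L)$; then $\{v_1,\dots,v_{j-1},u(y)\}$ is an orthonormal basis of $L$. Using a local chart $\phi$ of $\partial(K|L)$ around $y$ with $\phi(0)=y$ and $\partial_{s_i}\phi(0)=v_i$, one finds
\begin{equation*}
\partial_{s_i}\Phi=(1-t)v_i,\qquad \partial_t\Phi=-\phi(s)=-y.
\end{equation*}
In the chosen frame the first $j-1$ columns of $d\Phi$ have vanishing $u(y)$-component, while the last column has $u(y)$-component $-\langle y,u(y)\rangle$. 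Expanding along the $u(y)$-row and using $\langle y,u(y)\rangle>0$ (which holds because $o\in\operatorname{int}(K|L)$), the absolute value of $\det d\Phi$ equals $(1-t)^{j-1}\langle y,u(y)\rangle$.

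Finally I would apply the standard change-of-variables formula: for every nonnegative measurable $g\colon K|L\to[0,\infty]$,
\begin{equation*}
\int_{(K|L)_{t_0}\setminus (K|L)_{t_1}} g(x)\,\HH^j(dx)
=\int_{\partial(K|L)}\int_{t_0}^{t_1} g(y_t)\,(1-t)^{j-1}\langle y,u(y)\rangle\,dt\,\HH^{j-1}(dy).
\end{equation*}
Specializing to $g(x)=\PP_\varrho(x\notin K_n|L)\,h(x)$ — which is $[0,\infty]$-valued and jointly measurable in $x$ and in the sample points $x_1,\dots,x_n$, so that Tonelli's theorem permits the interchange of the $\HH^j$-integral with the expectation defining $\PP_\varrho$ — yields precisely the asserted identity.

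The only potential obstacle is the regularity of $\partial(K|L)$ needed to justify both the Jacobian computation and the change of variables; this is resolved, uniformly in $L$, by the freely rolling ball in $K$, which transfers to $K|L$ and guarantees $C^{1,1}$ boundary regularity in the relevant sense, so that the non-differentiable boundary points carry no $\HH^{j-1}$-mass and play no role in the identity.
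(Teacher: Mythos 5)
Your proposal is correct and follows essentially the same route as the paper: both identify the polar-type map $(y,t)\mapsto y_t$ on $\partial(K|L)\times(t_0,t_1)$ as a $C^1$ bijection with Jacobian $(1-t)^{j-1}\langle y,u(y)\rangle$ and then invoke a change-of-variables/coarea argument (the paper cites Federer, you compute the Jacobian explicitly in a tangent frame). The only cosmetic blemishes are that the half-open interval should be $[t_0,t_1)$ rather than $(t_0,t_1]$, the invocation of Tonelli is superfluous (one only needs that $x\mapsto\PP_\varrho(x\notin K_n|L)$ is measurable, not an interchange of integrals), and the remark about a measure-zero singular set is not needed since the rolling-ball hypothesis already forces $\partial(K|L)$ to be a $C^{1,1}$ submanifold with no singular points; none of these affects the validity of the argument.
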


\begin{proof}
The set $\partial (K| L)$ is a  $(j-1)$-dimensional submanifold of $L$ of class $C^1$, and the map
$$
T:\partial (K| L)\times (t_0,t_1)\to
{\rm int}(K| L)_{t_0}\setminus (K| L)_{t_1},
\mbox{ \ }(y,t)\mapsto y_t,
$$
is a $C^1$ diffeomorphism with Jacobian
$JT(y,t)=(1-t)^{j-1}\langle y,u(y)\rangle\ge 0$.
Thus the assertion
follows from Federer's area/coarea theorem (see \cite{Federer69}).
\end{proof}

In the following, we use the abbreviation $t(n):=n^{\frac{-1}{d-1}}$.

\begin{lemma}
\label{closetoK}
Let $1\leq j\leq d-1$. Then we have
$$
\int_{{\mathcal L}_j^d}\int_{(K|L)_{t(n)}}
\PP_\varrho\left(y\not\in K_n| L \right)\,\HH^{j}(dy)\,\nu_j(dL)=
o\left(n^{\frac{-2}{d-1}}\right).
$$
\end{lemma}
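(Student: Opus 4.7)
The plan is to split $(K|L)_{t(n)}$ into a boundary shell $(K|L)_{t(n)}\setminus(K|L)_\delta$ (with $\delta$ from Lemma~\ref{genup}) and a deep interior $(K|L)_\delta$. The shell will be handled by combining Lemma~\ref{integration} with Lemma~\ref{genup}, while the deep interior will be treated by an adaptation of the coordinate corner argument leading to \eqref{except}. Both bounds will be uniform in $L\in\mathcal{L}_j^d$, so that the outer $\nu_j$--integration contributes no growth.

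For the shell, I apply Lemma~\ref{integration} with $t_0=t(n)$ and pass $t_1\nearrow\delta$ (using monotone convergence to cope with the authorial restriction $t_1<\delta$) to rewrite
\begin{align*}
\int_{(K|L)_{t(n)}\setminus(K|L)_\delta}\PP_\varrho(y\notin K_n|L)\,\HH^{j}(dy)
&=\int_{\partial(K|L)}\!\!\int_{t(n)}^\delta(1-t)^{j-1}\PP_\varrho(y_t\notin K_n|L)\\
&\quad\times\langle y,u(y)\rangle\,dt\,\HH^{j-1}(dy).
\end{align*}
Since $rB^d\subset K\subset RB^d$ projects to $rB^j\subset K|L\subset RB^j$, the factors $(1-t)^{j-1}$, $\langle y,u(y)\rangle$ and the total perimeter $\HH^{j-1}(\partial(K|L))$ are uniformly bounded in $L$. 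By Lemma~\ref{genup}, $\PP_\varrho(y_t\notin K_n|L)\ll(1-\gamma_2 t^{(d-1)/2})^n$. The substitution $s=\gamma_2 t^{(d-1)/2}$ (whose lower endpoint at $t=t(n)$ is $\gamma_2 n^{-1/2}$), followed by $u=ns$ together with $(1-s)^n\le e^{-ns}$, converts the time integral into
$$\int_{t(n)}^\delta(1-\gamma_2 t^{(d-1)/2})^n\,dt\ll n^{-\frac{2}{d-1}}\int_{\gamma_2 n^{1/2}}^{\infty}u^{\frac{2}{d-1}-1}e^{-u}\,du=o\bigl(n^{-\frac{2}{d-1}}\bigr),$$
because the lower endpoint tends to infinity.

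For the deep interior, fix any orthonormal basis of $L$ and let $\Theta_1,\ldots,\Theta_{2^j}$ be the resulting coordinate corners at the origin of $L$. For $y\in(K|L)_\delta=(1-\delta)(K|L)$ the inclusion $rB^j\subset K|L$ gives $y+\delta rB^j\subset K|L$; hence $(y+\Theta_i)\cap(K|L)$ contains $y+(\Theta_i\cap\delta rB^j)$, which has $j$-volume $2^{-j}\alpha_j(\delta r)^j$. Since the upper part of $\partial K$ is a $1$-Lipschitz graph over $K|L$, the set $\widetilde\Theta_i(y):=\{x\in\partial K:x|L\in y+\Theta_i\}$ satisfies $\HH^{d-1}(\widetilde\Theta_i(y))\ge 2^{-j}\alpha_j(\delta r)^j$, and the continuity and positivity of $\varrho$ yield a uniform lower bound $\gamma_\delta>0$ on its $\varrho$-mass. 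If $y\notin K_n|L$, a separating hyperplane $H_L\subset L$ through $y$ forces some $y+\Theta_i\subset H_L^+$, so that $\{x_1,\ldots,x_n\}\cap\widetilde\Theta_i(y)=\emptyset$; a union bound gives $\PP_\varrho(y\notin K_n|L)\le 2^j(1-\gamma_\delta)^n$ and hence
$$\int_{(K|L)_\delta}\PP_\varrho(y\notin K_n|L)\,\HH^{j}(dy)\le V_j(K|L)\cdot 2^j(1-\gamma_\delta)^n=o\bigl(n^{-\frac{2}{d-1}}\bigr).$$
Adding the two contributions and integrating against the probability measure $\nu_j$ yields the claim. The main technical point is producing a lower bound on the $\varrho$-mass of the corner sets that is uniform in both $y\in(K|L)_\delta$ and $L\in\mathcal{L}_j^d$; this uniformity is precisely what the projected rolling-ball inclusion $rB^j\subset K|L$ and the graphlike structure of $\partial K$ relative to $L$ provide.
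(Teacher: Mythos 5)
Your overall structure — splitting $(K|L)_{t(n)}$ into the shell $(K|L)_{t(n)}\setminus(K|L)_\delta$ and the deep interior $(K|L)_\delta$, handling the shell with Lemma~\ref{integration} and Lemma~\ref{genup}, and handling the deep interior with a separate uniform estimate — is exactly the paper's decomposition, and your shell estimate is correct and essentially the same (the paper uses the cruder but sufficient bound that $(1-\gamma_2 n^{-1/2})^n$ is already super-polynomially small, whereas you track the lower endpoint of the Gamma-type integral; both yield $o(n^{-2/(d-1)})$).

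However, there is a genuine gap in your deep-interior argument when $j<d-1$. You assert that ``the upper part of $\partial K$ is a $1$-Lipschitz graph over $K|L$'' and deduce $\HH^{d-1}(\widetilde\Theta_i(y))\ge\HH^j\bigl(y+(\Theta_i\cap\delta rB^j)\bigr)$. This reasoning is dimensionally consistent only when $j=d-1$, in which case $\pi\vert_{\partial K}$ really is a two-sheeted graph over $K|L$ and orthogonal projection does not increase $\HH^{d-1}$. For $j<d-1$ the map $\pi:\partial K\to K|L$ is not a graph: the preimage of each interior point $y'\in K|L$ is a $(d-j-1)$-sphere $\partial K\cap(y'+L^\perp)$, and comparing $\HH^{d-1}$ of $\widetilde\Theta_i(y)\subset\partial K$ with $\HH^{j}$ of $y+(\Theta_i\cap\delta rB^j)\subset L$ requires a coarea argument plus a uniform lower bound on the $\HH^{d-j-1}$-size of the fibers. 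That lower bound is true — it follows, for instance, from Brunn--Minkowski concavity of $y'\mapsto V_{d-j}\bigl(K\cap(y'+L^\perp)\bigr)^{1/(d-j)}$ combined with $rB^{d-j}\subset K\cap L^\perp$ — but it is not free, and it is the step your sketch elides.

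The paper avoids this entirely. For $y\in(K|L)_\delta$ with $y\notin K_n|L$ it takes the separating hyperplane $H=H_L+L^\perp$ and the support point $x\in\partial K$ of the halfspace containing $K_n$, observes that $\langle x-y,u(x)\rangle\ge\omega:=\delta r$ by \eqref{nach4}, and then uses a fixed maximal $\omega/2$-separated net $\{z_1,\ldots,z_m\}\subset\partial K$ to find a ball $z_i+\tfrac{\omega}{2}B^d$ lying entirely outside $\text{int}(H^-)$, hence missed by all $n$ sample points. This works directly in $\R^d$, never projects the boundary, and gives a uniform bound $\PP_\varrho(y\notin K_n|L)\le m(1-\gamma_7)^n$ with constants independent of $L$, $y$, and $j$. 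If you want to retain your corner-based argument you must either restrict to $j=d-1$ or supply the missing fiber estimate; otherwise the paper's covering argument is the cleaner route.
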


\begin{proof} Let $\delta>0$ be chosen such that it satisfies the
conditions of Lemma~\ref{genup}. We may assume that
$n$ is large enough to satisfy $t(n)<\delta$ and $n\ge (\gamma_2)^2$. 
First, we treat that part of the integral which extends over the subset 
$(K|L)_{\delta}$ of $(K|L)_{t(n)}$. 

Let $\omega:=\delta r$. Then \eqref{nach4} yields
\begin{equation}
\label{omegadef}
\langle x-x_{\delta},u(x)\rangle\geq \omega\mbox{ \ \ for \ }
x\in\partial K.
\end{equation}
There exists a constant $\gamma_7>0$ such that the probability measure of
$(x+\frac{\omega}2\;B^d)\cap\partial K$ is at least
$\gamma_7$ for all $x\in\partial K$. We choose a maximal set
$\{z_1,\ldots,z_m\}\subset\partial K$ such that $\|z_i-z_l\|\geq \frac{\omega}2$
for $i\neq l$.

For $L\in{\mathcal L}_j^d$, let
$y\in(K|L)_{\delta}$. If $y\not\in K_n| L$, then
there exist a
hyperplane $H$ in $\R^d$ and a half space 
$H^-$ bounded by $H$ such that $y\in H$,
$H$ is orthogonal to $L$,
and $K_n\subset \text{int}(H^-)$. 
Choose $x\in \partial K$
such that $u(x)$ is an exterior unit normal to $H^-$.
Since $H$ intersects
$K_{\delta}$, we have $\langle x-y,u(x)\rangle\ge \omega$
by (\ref{omegadef}). Now there exists some $i\in\{1,\ldots,n\}$
with $\|x-z_i\|\leq \frac{\omega}2$, and hence 
$\{x_1,\ldots,x_n\}\subset \text{int}(H^-)$ yields
that $\{x_1,\ldots,x_n\}$ is disjoint from
$z_i+\frac{\omega}2\;B^d$. In particular, we have
\begin{equation}
\label{yinKLdelta}
\PP_\varrho\left(y\not\in K_n| L \right)
\leq m(1-\gamma_7)^n.
\end{equation}
Next let  $y\in \partial (K|L)$. If $t\in(t(n),\delta)$, then
Lemma~\ref{genup} yields
\begin{equation}
\label{yttn0}
\PP_\varrho\left(y_t\not\in K_n|L \right)\ll
\left(1-{\gamma}_2n^{-\frac{1}{2}}\right)^n
<e^{-{\gamma}_2n^{\frac{1}{2}}}\ll n^{\frac{-3}{d+1}}.
\end{equation}
In particular, writing $I$ to denote
the integral in Lemma~\ref{closetoK}, we obtain from
 Lemma~\ref{integration}, \eqref{yinKLdelta} and  \eqref{yttn0} that
\begin{eqnarray*}
I&\ll&
\int_{{\mathcal L}_j^d}\int_{(K|L)_{\delta}}
\PP_\varrho\left(y\not\in K_n|L \right)\,\HH^{j}(dy)\,\nu_j(dL)+\\
& &+\int_{{\mathcal L}_j^d}\int_{t(n)}^{\delta}\int_{\partial (K|L)}
\PP_\varrho\left(y_t\not\in K_n|L \right)\,\HH^{j-1}(dy)\,dt\,\nu_j(dL)\\
&\ll&m(1-\gamma_7)^n+\int_{{\mathcal L}_j^d}\int_{\partial (K|L)}
n^{\frac{-3}{d-1}}\,\HH^{j-1}(dy)\,\nu_j(dL)=
o\left(n^{\frac{-2}{d-1}}\right),
\end{eqnarray*}
which is the required estimate.
\end{proof}

It follows by applying \eqref{basic}, Lemma~\ref{closetoK}
and  Lemma~\ref{integration}, in this order,
that
\begin{align*}
&\lim_{n\to\infty}n^{\frac{2}{d-1}}(V_j(K)-\E_\varrho(V_j(K_n)))\\
&=\frac{\binom{d}{ j}\alpha_d}{\alpha_j\alpha_{d-j}}\lim_{n\to\infty}n^{\frac{2}{d-1}}
\int_{{\mathcal L}_j^d}\int_{K| L}\PP_\varrho(y\notin K_n| L)
\,\HH^{j}(dy)\,\nu_j(dL)\\
&=\frac{\binom{d}{j}\alpha_d}{\alpha_j\alpha_{d-j}}
\lim_{n\to\infty}n^{\frac{2}{d-1}}
\int_{{\mathcal L}_j^d}\int_{(K|L)\setminus (K| L)_{t(n)}}
\PP_\varrho(y\notin K_n| L)\,\HH^{j}(dy)\,\nu_j(dL)\\
&=\frac{\binom{d}{ j}\alpha_d}{\alpha_j\alpha_{d-j}}
\lim_{n\to\infty}
\int_{{\mathcal L}_j^d}\int_{\partial (K| L)}\int_{0}^{t(n)}
n^{\frac{2}{d-1}}\PP_\varrho(y_t\notin K_n| L)
(1-t)^{j-1}\langle y, u(y)\rangle\, dt\,\HH^{j-1}(dy)\,\nu_j(dL).
\end{align*}

We deduce from Lemma~\ref{genup} and (\ref{Gamma}) that if
$n>n_0$, $L\in {\mathcal L}_j^d$ and $y\in\partial (K|L)$, then
$$
\int_0^{t(n)}
n^{\frac2{d-1}} \PP_\varrho\left(y_t\not\in K_n|L \right)
\langle y,u(y)\rangle(1-t)^{j-1}\, dt\ll C,
$$
where $n_0$ and $C$ depend on $K$ and $\varrho$.
Therefore, we may apply Lebesgue's dominated convergence theorem,
and thus we conclude
\begin{equation}
\label{limitform}
\lim_{n\to\infty}n^{\frac{2}{d-1}}(V_j(K)-\E_\varrho(V_j(K_n)))=
\frac{\binom{d}{ j}\alpha_d}{\alpha_j\alpha_{d-j}}
\int_{{\mathcal L}_j^d}
\int_{\partial (K| L)}J_\varrho(y, L)\, \HH^{j-1}(dy)\,\nu_j(dL),
\end{equation}
where, for $L\in {\mathcal L}_j^d$ and $y\in\partial (K|L)$, we have
\begin{equation}
\label{Jdef}
J_\varrho(y, L):=\lim_{n\to\infty}
\int_0^{t(n)}n^{\frac{2}{d-1}}\langle y,u(y)
\rangle\PP_\varrho\left( y_t\notin K_n|L \right)\,dt.
\end{equation}
Subsequently, we shall inspect this limit more closely. 
In a first step, we shall consider those points $y\in \partial (K|L)$ for which 
there is a normal boundary point $x\in \partial K$ with $y=x|L$ and $H_{d-1}(x)=0$.

\begin{lemma}
\label{zerocurv}
Let $L\in {\mathcal L}_j^d$, and let $y\in\partial (K|L)$. If
$x\in \partial K$ is a normal boundary point of $K$ with $y=x|L$ and $H_{d-1}(x)=0$,
then $J_\varrho(y, L)=0$.
\end{lemma}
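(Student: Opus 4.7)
The plan is to sharpen Lemma~\ref{genup} under the hypothesis $H_{d-1}(x)=0$, showing that for every $M>0$ there exist $t_M>0$ and a constant $c>0$ depending on $K$ and $\varrho$ (but not on $M$) such that
$$
\PP_\varrho(y_t\notin K_n|L)\ll\left(1-cM t^{(d-1)/2}\right)^n+2^d(1-\gamma_1)^n
$$
for all $0<t<t_M$. Inserting this into \eqref{Jdef}, applying \eqref{Gamma} with $c_3$ replaced by $cM$ (valid once $n$ is large enough that $t(n)<t_M$ and $cMn^{-1/2}<1$), and observing that $n^{1/(d-1)}(1-\gamma_1)^n=o(1)$, yields $J_\varrho(y,L)\ll M^{-2/(d-1)}$. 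Since $M>0$ is arbitrary, this forces $J_\varrho(y,L)=0$.

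To produce the improved bound I would keep the overall architecture of the proof of Lemma~\ref{genup}, only strengthening the lower estimate for $\HH^{d-1}(\Theta_{i,t})$. Choose the orthonormal basis of $u(x)^\perp$ used to define $\Theta_1',\ldots,\Theta_{2^{d-1}}'$ so as to diagonalise the second fundamental form $Q$ of $\partial K$ at $x$, and abbreviate $\alpha:=\langle x,u(x)\rangle\in[r,R]$. By hypothesis at least one eigenvalue of $Q$ vanishes, so for any prescribed $M>0$ one can choose $\epsilon=\epsilon(M)>0$ so small that the regularised ellipsoid
$$
E_t:=\left\{z\in u(x)^\perp:\tfrac12Q(z)+\epsilon\|z\|^2\le t\alpha\right\}
$$
has $\HH^{d-1}$-volume at least $2^{d-1}Mt^{(d-1)/2}$: indeed its volume is a constant multiple of $(t\alpha)^{(d-1)/2}/\prod_i\sqrt{k_i(x)+2\epsilon}$, and the factor coming from the vanishing eigenvalue can be made arbitrarily large by driving $\epsilon\to 0$. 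By \eqref{diff}, $E_t$ is contained in the level set $\{f\le t\alpha\}$ (the projection onto $u(x)^\perp$ of the cap of $\partial K$ above the hyperplane through $x^*_t$) for all $t<t_M$, and by the central symmetry of $E_t$ with respect to the coordinate hyperplanes each orthant satisfies $\HH^{d-1}(E_t\cap\Theta_i')\ge Mt^{(d-1)/2}$.

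The last step is to transfer this cap estimate to $\HH^{d-1}(\Theta_{i,t})$. A direct computation using the local parametrisation $z\mapsto x+z-f(z)u(x)$ of $\partial K$ shows that such a boundary point lies in $x_t+[\Theta_i',\R_+x]$ precisely when $f(z)\le t\alpha$ and $z+(f(z)/\alpha)(x|u(x)^\perp)\in\Theta_i'$. The shear vector $(f(z)/\alpha)(x|u(x)^\perp)$ has Euclidean norm of order $t$ on $E_t$, while $E_t$ has diameter of order $\sqrt{t/\epsilon}$, so the subset of $E_t\cap\Theta_i'$ satisfying the shifted condition has area at least $(1-O(\sqrt{t\epsilon}))\HH^{d-1}(E_t\cap\Theta_i')\ge\tfrac12Mt^{(d-1)/2}$ for sufficiently small $t$. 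This gives $\HH^{d-1}(\Theta_{i,t})\gg Mt^{(d-1)/2}$ uniformly in $i$, and then the separating-hyperplane argument from Lemma~\ref{genup} produces the stated probability bound. The main geometric subtlety is precisely this last step, the verification that the tangential component $x|u(x)^\perp$ of $x$ contributes only a lower-order shear; once this is in hand the rest of the argument is a quantitative version of Lemma~\ref{genup}.
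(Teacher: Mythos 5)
Your argument follows essentially the same route as the paper: both exploit the vanishing principal direction to show that the probability content of the caps $\Theta_{i,t}$ is asymptotically much larger than $t^{(d-1)/2}$, and both then apply the separating-hyperplane mechanism of Lemma~\ref{genup} together with \eqref{Gamma} to drive $J_\varrho(y,L)$ to zero. The only difference is cosmetic: you regularize the degenerate quadratic form to get the blow-up of the cap volume, while the paper obtains the same effect from a function $\psi(t)\to\infty$ along the flat direction combined with \eqref{x*} in the transverse directions (your shear error should read $O(\sqrt{t})$ rather than $O(\sqrt{t\epsilon})$, since the short semi-axes of $E_t$ have length $\asymp\sqrt{t}$, but this does not affect the conclusion).
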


\begin{proof} Let $x\in\partial K$ be a normal boundary point 
with $y=x|L$ and $H_{d-1}(x)=0$. 
First, we show the existence
of a decreasing function $\varphi$
on $(0,\frac{r}R)$ with
$\lim_{t\to 0^+}\varphi(t)=\infty$ satisfying
\begin{equation}
\label{eqainf}
\PP_\varrho\left( y_t\not\in K_n| L \right)\leq 2^{d-1}
\left(1-\varphi(t) t^{\frac{d-1}2}\right)^n.
\end{equation}
In the following, we always assume that $t>0$ is sufficiently small, that is 
$n$ is sufficiently large, so that all expressions that arise are well defined.  
Let $v_1,\ldots,v_{d-1}$ be an orthonormal basis
in $u(x)^\bot$ such that these vectors are 
principal directions of curvature of $K$ at $x$ and such that 
the curvature is zero in the direction
of $v_1$. In addition, let $\Theta'_1,\ldots,\Theta'_{2^{d-1}}$
be the coordinate corners
in $u(x)^\bot$, and, for $i=1,\ldots,2^{d-1}$
and $t\in(0,1)$, let $\Theta_{i,t}=\partial K\cap\left(x_t+\left[\Theta'_i,\R_+x\right]
\right)$ as before. The continuity of $\varrho$ yields that 
$$
\int_{\Theta_{i,t}}\varrho(x)\,\HH^{d-1}(dx)\gg {\HH}^{d-1}(\Theta_{i,t}).
$$

Since the curvature is zero in the direction
of $v_1$, there exists a function $\psi$ on $(0,\frac{r}R)$ with
$\lim_{t\to 0^+}\psi(t)=\infty$ satisfying
$$
x^*_t-\psi(t)\sqrt{t}v_1\in K \mbox{ \ and \ }
x^*_t+\psi(t)\sqrt{t}v_1\in K.
$$
Combining  \eqref{x*} and \eqref{x*x}, we deduce the existence
of a decreasing function $\tilde{\varphi}$
on $(0,\frac{r}R)$ with
$\lim_{t\to 0^+}\tilde{\varphi}(t)=\infty$ satisfying
$$
\int_{\Theta_{i,t}}\varrho(x)\,\HH^{d-1}(dx)\geq \tilde{\varphi}(t)t^{\frac{d-1}2},
$$
for $i=1,\ldots,2^{d-1}$.

First, we assume that $y_t\not\in K_n| L$ and $o\in K_n$.
In particular, then we also have $x_t\not\in K_n$, and hence
there exists a hyperplane $H$ through $x_t$
such that $K_n$ lies on one side of $H$.
Since $o\in K_n$, it follows that $H$ separates
$K_n$ from some $\Theta_{i,t}$, and  therefore
\begin{equation}
\label{eqainf0}
\PP_\varrho\left( y_t\not\in K_n| L , o\in K_n\right)\leq 2^{d-1}
\left(1-\tilde{\varphi}(t) t^{\frac{d-1}2}\right)^n.
\end{equation}
On the other hand, if $o\not\in K_n|L$, then
\eqref{except} holds. Combining this with
\eqref{eqainf0}, we conclude \eqref{eqainf}.
In turn, we deduce from \eqref{Gamma} that
$$
J_\varrho(y, L)\ll\lim_{n\to\infty}n^{\frac{2}{d-1}}
\int_0^{t(n)}
(1-\varphi(t(n)) t^{\frac{d-1}2})^n\,dt\ll
\lim_{n\to\infty}\varphi(t(n))^{\frac{-2}{d-1}}=0.
\mbox{ \ }
$$
\end{proof}

In the next section, we study the more difficult case of boundary 
points with positive Gauss curvature.

\section{Normal boundary points and caps}

Let $L\in {\mathcal L}_j^d$, and let $y\in\partial (K|L)$ be such that
$y=x|L$ for some (uniquely determined) normal boundary point $x\in\partial K$ with $H_{d-1}(x)>0$.
We keep $x$ and $y$
fixed throughout this section.
First, we reparametrize $x_t$ and $y_t$ in terms
of the probability measure of the
corresponding cap of $\partial K$.
Using this reparametrization,
we show that $J_\varrho(y, L)$ essentially depends
only on the random points near $x$
(see Lemma~\ref{pointsneary}), and then in a second step we
pass from the case of a general convex body $K$ to the case of a Euclidean ball.

For  $t\in (0,1)$, we consider
the hyperplane $H(x,t):=
\{z\in \R^d:\langle u(x),z\rangle=\langle u(x),x_t\rangle\}$, the half-space 
$H^+(x,t):=\{z\in \R^d:\langle u(x),z\rangle\ge\langle u(x),x_t\rangle\}$, and
the cap $C(x,t):=K\cap H^+(x,t)$ 
whose bounding hyperplane is $H(x,t)$. Next we
reparametrize $x_t$ in terms of the induced
probability measure of the cap $C(x,t)$; namely,
$$
\tilde{x}_s:=x_t\quad \mbox{ \ and \ }\quad \tilde{y}_s:=y_t,
$$
where, for a given sufficiently small $s\ge 0$, the parameter $t\ge 0$ is uniquely 
determined by the equation
\begin{equation}\label{sdef}
s=\int_{C(x,t)\cap\partial K}\varrho(w)\, \HH^{d-1}(dw).
\end{equation}
Note that $s$ is a strictly increasing and continuous function of $t$. 
We further define
\begin{equation}
\label{capK}
\widetilde{C}(x,s)=C(x,t)\qquad\text{and}\qquad
\widetilde{H}(x,s)=H(x,t),
\end{equation}
where again, for given $s$, the parameter $t$ is determined by \eqref{sdef}.
Observe that $\partial K\cap H^+(x,t)=\partial K\cap C(x,t)$. 
Subsequently, we explore the relation between $s$ and $t$. 
Let $f:u(x)^\perp\to [0,\infty]$ 
be a convex function such that the restriction of the map 
$$
F:u(x)^\perp\to\R^d,\qquad z\mapsto x+z-f(z)u(x),
$$
to a neighborhood of $o$ parametrizes $\partial K$ in a neighborhood of $x$. 
Moreover, we consider the transformations
$$
\Pi:\R^d\to u(x)^\perp,\qquad y\mapsto y-x-\langle y-x,u(x)\rangle u(x),
$$
and
$$
T:u(x)^\perp\times\R\to u(x)^\perp\times\R,\qquad (z_1,\ldots,z_{d-1},\alpha)\mapsto
(\sqrt{k_1}z_1,\ldots,\sqrt{k_{d-1}}z_{d-1},\alpha),
$$
where $u(x)^\perp$ is considered to be a subset of $u(x)^\perp\times\{0\}$ 
and $k_i=k_i(x)$, $i=1,\ldots,d-1$, are the principle 
curvatures of $\partial K$ at $x$. Then we obtain
\begin{eqnarray*}
&&\int_{\partial K\cap H^+(x,t)}\varrho(w)\,\HH^{d-1}(dw)\\
&&=\int_{\Pi(\partial K\cap H^+(x,t))}\varrho(F(z))
\sqrt{1+\|\nabla f(z)\|^2}\,\HH^{d-1}(dz)\\
&&=\int_{T(\Pi(\partial K\cap H^+(x,t)))}
\varrho(F\circ T^{-1}(z))\sqrt{1+\|\nabla f(T^{-1}(z))
\|^2}\,H_{d-1}(x)^{-1/2}\,\HH^{d-1}(dz).
\end{eqnarray*}
Let $\overline{K}:=T(K-x)+x$, and hence 
$T(\Pi(\partial K\cap H^+(x,t)))=\Pi(\partial\overline{K}\cap H^+(x,t))$.
If $\overline{f}$ is defined for $\overline{K}$ as $f$ is defined for $K$, and
$$
\overline{\varrho}(w):=\varrho(F\circ T^{-1}\circ\Pi(w)),\qquad
g(w):=\frac{\sqrt{1+\|\nabla f(T^{-1}(\Pi(w)))\|^2}}{\sqrt{1+\|\nabla \overline{f}(\Pi(w))\|^2}},
$$
for $ w\in \partial \overline{K}\cap H^+(x,t)$, then we obtain
$$
\int_{\partial K\cap H^+(x,t)}\varrho(w)\,\HH^{d-1}(dw)
=H_{d-1}(x)^{-1/2}\int_{\partial \overline{K}\cap H^+(x,t)}\overline{\varrho}(w)g(w)\,\HH^{d-1}(dw).
$$
Next we put $H(r):=x-ru(x)+u(x)^\perp$ and denote by $n_{\overline{K}}(w)$ the exterior unit normal of
$\overline{K}$ at $w\in\partial \overline{K}$. Since (cf.~the notes for Section 1.5 (2) in \cite{Sch93})
$$
\overline{f}(z)=\frac{1}{2}\|z\|^2+o(\|z\|^2), \qquad \|\nabla  \overline{f}(z)\|=\|z\|+o(\|z\|),\qquad n_{\overline{K}}(w)=
\frac{\nabla \overline{f}(\bar w)+u(x)}{\sqrt{1+\|\nabla 
\overline{f}(\bar w)\|^2}}
$$
with $\bar w:=\Pi (w)$ and $z\in u(x)^\perp$,
we get
$$
\sqrt{1-\langle n_{\overline{K}}(w),u(x)\rangle ^2}^{-1}=
%\frac{\sqrt{1+\|\nabla f_{\widetilde{K}}(\bar w)\|^2}}{\|\nabla f_{\widetilde{K}}(\bar w)\|}=
\frac{\sqrt{1+(\|\bar w\|+o(\|\bar w\|))^2}}{\|\bar w\|+o(\|\bar w\|)}.
$$
Thus a simple application of the coarea formula yields that, for $t>0$ sufficiently small 
and $d\ge 2$, 
\begin{eqnarray*}
&&\int_{\partial K\cap H^+(x,t)}\varrho(w)\,\HH^{d-1}(dw)\\
&&=H_{d-1}(x)^{-1/2}\int_0^{t\langle x,u(x)\rangle}\int_{\partial\overline{K}\cap H(r)}
\overline{\varrho}(w)g(w)\sqrt{1-\langle n_{\overline{K}}(w),u(x)\rangle ^2}^{-1}\,\HH^{d-2}(dw)\, dr.
\end{eqnarray*}
Since also $\overline{K}$ has a rolling ball, the map $w\mapsto n_{\overline{K}}(w)$ 
is continuous, and therefore also
$$
r\mapsto \int_{\partial\overline{K}\cap H(r)}
\overline{\varrho}(w)g(w)\sqrt{1-\langle n_{\overline{K}}(w),
u(x)\rangle ^2}^{-1}\,\HH^{d-2}(dw)
$$
is continuous. This implies that
\begin{eqnarray*}
&&\frac{\,\partial }{\partial\, t}\int_{\partial K\cap H^+(x,t)}
\varrho(w)\,\HH^{d-1}(dw)\\
&&=\frac{\langle x,u(x)\rangle}{H_{d-1}(x)^{1/2}}\int_{\partial 
\overline{K}\cap H(t\langle x,u(x)\rangle)}
\overline{\varrho}(w)g(w)\sqrt{1-\langle n_{\overline{K}}(w),u(x)
\rangle ^2}^{-1}\,\HH^{d-2}(dw)\\
&&=\frac{ \langle x,u(x)\rangle}{H_{d-1}(x)^{1/2}} 
\int_{\partial\overline{K}\cap H(t\langle x,u(x)\rangle)}
\overline{\varrho}(w)g(w)\frac{\sqrt{1+(\sqrt{2t\langle x,u(x)\rangle}+o(\sqrt{t}))^2}}{\sqrt{2t\langle x,u(x)\rangle}+o(\sqrt{t})}
\,\HH^{d-2}(dw) .
\end{eqnarray*}
Clearly, we have $\overline\varrho(w)\to \overline\varrho(x)=\varrho(x)$ 
and $g(w)\to 1$, as $t\to 0^+$, uniformly with respect to 
$w\in \partial\overline K \cap H(t\langle x, u(x)\rangle)$.
Moreover, since
$$
\overline{\Gamma}:=\left\{x+z-\frac{1}{2}\|z\|^2u(x):z\in u(x)^\perp\right\}
$$
is the osculating paraboloid  of $\overline{K}$ and $\overline{\Gamma}$  
has rotational symmetry, we obtain for $s=s(t)$ that
\begin{eqnarray*}
%\label{dts}
\lim_{t\to 0^+}t^{-\frac{d-3}{2}}\cdot
\frac{\partial\, s}{\partial\, t}(t)
& =&\frac{\varrho(x)\langle x,u(x)\rangle}{H_{d-1}(x)^{1/2}}
\lim_{t\to 0^+}\left(
t^{-\frac{d-3}{2}}(d-1)\alpha_{d-1}
\frac{\sqrt{2t\langle x,u(x)\rangle}^{d-2}}{\sqrt{2t\langle x,u(x)\rangle}}\right)\\
&=&(d-1)\alpha_{d-1}H_{d-1}(x)^{-\frac{1}{2}}\varrho(x)\left(2\langle x,u(x)\rangle\right)^{\frac{d-3}{2}}
\langle x,u(x)\rangle\\
&=&
(d-1)\alpha_{d-1}
\varrho(x) 2^{\frac{d-3}2}\langle x,u(x)\rangle^{\frac{d-1}2}
H_{d-1}(x)^{-\frac12}.
\end{eqnarray*}
Thus we have shown that
\begin{equation}\label{dts}
\lim_{t\to 0^+}t^{-\frac{d-3}2}\cdot
\frac{\partial\, s}{\partial\, t}(t) =
(d-1)\cdot
\varrho(x)2^{\frac{d-3}2}\langle x,u(x)\rangle^{\frac{d-1}2}
H_{d-1}(x)^{-\frac12}\alpha_{d-1}.
\end{equation}
In the same way, we also obtain
\begin{equation}
\label{ts}
\lim_{t\to 0^+}t^{-\frac{d-1}2}\cdot s(t) =
\varrho(x)2^{\frac{d-1}2}\langle x,u(x)\rangle^{\frac{d-1}2}
H_{d-1}(x)^{-\frac12}\alpha_{d-1}.
\end{equation}
Observe that \eqref{dts} and \eqref{ts} are valid also for $d=2$. 
In particular, \eqref{dts} and \eqref{ts} imply that 
$J_\varrho(y,L)$ can be rewritten as (cf. \eqref{Jdef}) 
\begin{equation}
\label{Js}
J_\varrho(y,L)=(d-1)^{-1}G(x)^2
 \lim_{n\to\infty}\int_0^{g(y,n)}n^{\frac2{d-1}}
\PP_\varrho\left(\tilde{y}_s\not\in K_n|L \right)s^{-\frac{d-3}{d-1}}\,ds,
\end{equation}
where
$$
G(x):=(\alpha_{d-1})^{\frac{-1}{d-1}}\varrho(x)^{\frac{-1}{d-1}}H_{d-1}(x)^{\frac1{2(d-1)}}
$$
and
$$
\lim_{n\to\infty}n^{\frac12}g(y,n)=
\alpha_{d-1}\varrho(x)(2\langle u(x),x\rangle)^{\frac{d-1}2}H_{d-1}(x)^{-\frac{1}{2}}.
$$

Now we show that in the domain
of integration $g(y,n)$ can be replaced by $n^{-1/2}$, that is
\begin{equation}
\label{Js0}
J_\varrho(y,L)=(d-1)^{-1}G(x)^2
 \lim_{n\to\infty}\int_0^{n^{-1/2}}n^{\frac2{d-1}}
\PP_\varrho\left(\tilde{y}_s\not\in K_n|L \right)s^{-\frac{d-3}{d-1}}\,ds.
\end{equation}
 It follows from
Lemma~\ref{genup} and \eqref{ts} that there exist constants $c_0>0$
and $c_2>c_1>0$ depending
on $y$, $K$, $L$, $\varrho$ such that if $s>0$ is small enough, then
$$
\PP_\varrho\left(\tilde{y}_s\not\in K_n|L \right)\ll
(1-c_0s)^n,
$$
and if $n$ is large
and $s$ is
between $g(n,y)$ and $n^{-1/2}$, then $c_1n^{-1/2}<s<c_2n^{-1/2}$.
In particular, 
\begin{eqnarray*}
 && \lim_{n\to\infty}\int_{c_1n^{-1/2}}^{c_2n^{-1/2}}n^{\frac2{d-1}}
\PP_\varrho\left(\tilde{y}_s\not\in K_n|L \right)s^{-\frac{d-3}{d-1}}\,ds\\
&&\ll \lim_{n\to\infty}n^{\frac2{d-1}}\int_{c_1n^{-1/2}}^{c_2n^{-1/2}}
e^{-c_0ns}s^{-\frac{d-3}{d-1}}\,ds\\
&&\leq\lim_{n\to\infty}
c_2n^{\frac2{d-1}-\frac12}
e^{-c_1c_0n^{\frac12}}c_1^{-\frac{d-3}{d-1}}n^{\frac{d-3}{2(d-1)}}=0,
\end{eqnarray*}
and hence \eqref{Js} yields \eqref{Js0}.

Let $\pi:\R^d\to u(x)^\perp$ denote the orthogonal projection to $u(x)^\perp$. 
Using \eqref{x*x}, \eqref{nach4} and \eqref{ts}, we obtain
\begin{eqnarray}
\label{Exs*}
\lim_{s\to 0^+}s^{\frac{-1}{d-1}}\|\pi(x-\tilde{x}_s)\|&=&0,\\
%\label{Exs*height}
\lim_{s\to 0^+}s^{\frac{-2}{d-1}}\langle u(x),x-\tilde{x}_s\rangle&=&
\frac{1}{2}G(x)^2.\nonumber
\end{eqnarray}
Let $Q$ denote the second fundamental form of
$\partial K$ at $x$ (cf.~\eqref{diff}), 
considered as a function on $u(x)^\perp$. Then there are an 
orthonormal basis $v_1,\ldots,v_{d-1}$ of $u(x)^\perp$ and 
positive numbers $k_1,\ldots,k_{d-1}>0$ such that 
$$
Q\left(\sum_{i=1}^{d-1}z_iv_i\right)=\sum_{i=1}^{d-1}k_iz_i^2.
$$  
Further, let
$\pi$ be the orthogonal projection to $u(x)^\perp$, and define 
$$
E:=\{z\in u(x)^\perp:\, Q(z)\le 1\},
$$
which is the Dupin indicatrix of $K$ at $x$, whose half axes
are $k_i(x)^{-1/2}$, $i=1,\ldots,d-1$. In addition,
let $\Gamma$ be the convex hull of the osculating paraboloid of $K$ 
at $x\in\partial K$, that is
$$
\Gamma=\{x+z-tu(x):\, z\in u(x)^\perp, 
t\geq \mbox{$\frac12$}\,Q(z)\}.
$$
Hence, we have
$$
\Gamma\cap H(x,t)=x^*_t+\sqrt{2t\langle x,u(x)\rangle}\,E,
$$
and there exists an increasing function $\tilde{\mu}(s)$
with $\lim_{s\to 0^+}\tilde{\mu}(s)=1$ such that
\begin{equation}
\label{Exs}
\tilde{x}_s^*+\tilde{\mu}(s)^{-1}G(x)
\cdot s^{\frac1{d-1}}E\subset
K\cap\widetilde{H}(x,s)\subset
\tilde{x}_s^*+\tilde{\mu}(s)G(x)
\cdot s^{\frac1{d-1}}E,
\end{equation}
where $\tilde{x}_s^*:=x_t^*\in (x-\R_+ u(x))\cap \widetilde{H}(x,s)$, and $s$ and $t$ are related
by equation \eqref{sdef}. From \eqref{Exs*} it follows that also
\begin{equation}
\label{Exsb}
\tilde{x}_s+\tilde{\mu}(s)^{-1}G(x)
\cdot s^{\frac1{d-1}}E\subset
K\cap\widetilde{H}(x,s)\subset
\tilde{x}_s+\tilde{\mu}(s)G(x)
\cdot s^{\frac1{d-1}}E,
\end{equation}

The rest of the proof is devoted to identifying the asymptotic behavior of the
integral \eqref{Js0}.
First, we adjust the domain of integration and the integrand
in a suitable way. In a second step,
the resulting expression is compared to the case where $K$ is the unit ball.
We recall that $x_1,\ldots,x_n$ are random points in $\partial K$,
and we put $\Xi_n:=\{x_1,\ldots,x_n\}$, hence $K_n=[\Xi_n]$. For a finite 
set $X\subset\R^d$, let $\#X$ denote the cardinality of $X$.

\begin{lemma}
\label{pointsneary}
For $\varepsilon\in(0,1)$, there exist
$\alpha,\beta>1$ and an integer $k>d$, depending only on $\varepsilon$
and $d$, with the following property.
If $L\in {\mathcal L}_j^d$,
$y\in\partial (K|L)$, $x\in\partial K$ is a normal boundary point 
of $K$ such that $y=x|L$ and $H_{d-1}(x)>0$, 
and if $n>n_0$, where $n_0$ depends on $\varepsilon,x,K,\varrho, L$, 
then
$$
\int_0^{n^{-1/2}}
\PP_\varrho\left(\tilde{y}_s\not\in K_n|L \right)s^{-\frac{d-3}{d-1}}\,ds
=\int_{\frac{\varepsilon^{(d-1)/2}}n}^{\frac{\alpha}n}
\varphi(K,L,y,\varrho,\varepsilon,s)s^{-\frac{d-3}{d-1}}\,ds
+O\left(\frac{\varepsilon}{n^{\frac2{d-1}}}\right),
$$
where
$$
\varphi(K,L,y,\varrho,\varepsilon,s)=
\PP_\varrho\left(\left(\tilde{y}_s\not\in
([\widetilde{C}(x,\beta s)\cap\Xi_n]|L)\right)
\mbox{\rm and }
\left(\#(\widetilde{C}(x,\beta s)\cap\Xi_n)\leq k\right)\right).
$$
\end{lemma}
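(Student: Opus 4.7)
The plan is to split the integration domain into $I_-=[0,\varepsilon^{(d-1)/2}/n]$, $I_0=[\varepsilon^{(d-1)/2}/n,\alpha/n]$ and $I_+=[\alpha/n,n^{-1/2}]$, and to choose $\alpha$, $\beta$, $k$ in this order, each depending only on $\varepsilon$ and $d$. Writing $A_s:=\{\tilde y_s\notin K_n|L\}$, $A_{\beta,s}:=\{\tilde y_s\notin[\widetilde C(x,\beta s)\cap\Xi_n]|L\}$ and $C_{k,s}:=\{\#(\widetilde C(x,\beta s)\cap\Xi_n)\le k\}$, the integrand $\varphi$ of the lemma equals $\PP_\varrho(A_{\beta,s}\cap C_{k,s})$. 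Since $[\widetilde C(x,\beta s)\cap\Xi_n]\subset K_n$, one has $A_s\subset A_{\beta,s}$, hence
$$
|\PP_\varrho(A_s)-\varphi|\le \PP_\varrho(A_{\beta,s}\setminus A_s)+\PP_\varrho(C_{k,s}^{\,c}),
$$
and it is enough to prove that this bound is $\le 2\varepsilon$ pointwise on $I_0$ while $\int_{I_\pm}\PP_\varrho(A_s)\,s^{-(d-3)/(d-1)}\,ds=O(\varepsilon/n^{2/(d-1)})$.

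The outer pieces are handled crudely. On $I_-$, the trivial bound $\PP_\varrho(A_s)\le 1$ together with $\int_0^{\varepsilon^{(d-1)/2}/n}s^{-(d-3)/(d-1)}\,ds=\tfrac{d-1}{2}\varepsilon\cdot n^{-2/(d-1)}$ suffice. On $I_+$, Lemma \ref{genup} combined with the reparametrization estimate \eqref{ts} gives $\PP_\varrho(A_s)\ll (1-c_0s)^n\le e^{-c_0ns}$ for some $c_0>0$ depending on $x,K,\varrho,L$; the substitution $u=c_0ns$ yields
$$
\int_{\alpha/n}^{\infty}e^{-c_0ns}s^{-(d-3)/(d-1)}\,ds=c_0^{-2/(d-1)}n^{-2/(d-1)}\int_{c_0\alpha}^{\infty}e^{-u}u^{-(d-3)/(d-1)}\,du,
$$
and the tail integral is forced below $\varepsilon$ by taking $\alpha$ large in $\varepsilon$ and $d$.

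On the middle piece the count error is handled via Markov's inequality: since $\widetilde C(x,\beta s)\cap\partial K$ has $\varrho$-probability exactly $\beta s\le\alpha\beta/n$, the variable $\#(\widetilde C(x,\beta s)\cap\Xi_n)$ is Binomial$(n,\beta s)$ with mean at most $\alpha\beta$, whence $\PP_\varrho(C_{k,s}^{\,c})\le\alpha\beta/k\le\varepsilon$ as soon as $k\ge\alpha\beta/\varepsilon$. Integrating this $\varepsilon$-bound against $s^{-(d-3)/(d-1)}$ over $I_0$ contributes $O(\varepsilon\alpha^{2/(d-1)}/n^{2/(d-1)})=O(\varepsilon/n^{2/(d-1)})$.

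The main obstacle is the geometric localization error $\PP_\varrho(A_{\beta,s}\setminus A_s)$. If this event occurs, a hyperplane $H_L\subset L$ through $\tilde y_s$ strictly separates $[\widetilde C(x,\beta s)\cap\Xi_n]|L$ from $\tilde y_s$, its lift $H:=H_L+L^\perp$ passes through $\tilde x_s$, and the condition $\tilde y_s\in K_n|L$ forces some $x_i\in\Xi_n\cap H^+$ to lie outside $\widetilde C(x,\beta s)$. The plan is to extract from this configuration a fixed cap $D\subset\widetilde C(x,\beta s)$ of $\partial K$ whose $\varrho$-probability is at least $\gamma\beta s$ for a geometric constant $\gamma>0$ and which must be disjoint from $\Xi_n$; the sandwich inclusion \eqref{Exs} around $K\cap\widetilde H(x,\beta s)$, together with the rolling ball condition, supplies such a $D$ provided $\beta$ is large in $\varepsilon,d$, because the local shape of $\partial K$ near $x$ is squeezed between two dilates of the Dupin indicatrix of comparable size. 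The outer normal $v$ of $H$ lies on the unit sphere of $L$, and using $o\in K_n$ up to an error of order $(1-\gamma_1)^n$ supplied by \eqref{except} the direction $v$ may be restricted to the compact set $\{v\in L:\langle v,x\rangle\ge 0\}$; covering it by a finite $\varepsilon$-net and applying the union bound reduces the task to showing that for each fixed $v$ in the net the associated cap $D$ is $\Xi_n$-free with probability at most $(1-\gamma\beta s)^n\le\exp(-\gamma\beta\varepsilon^{(d-1)/2})\le\varepsilon$, which holds once $\beta$ is sufficiently large. Integrating this pointwise bound against $s^{-(d-3)/(d-1)}$ over $I_0$ yields the final $O(\varepsilon/n^{2/(d-1)})$ error and completes the proof.
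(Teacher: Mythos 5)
Your high-level plan --- split $[0,n^{-1/2}]$ into three pieces and show the outer two are negligible while on the middle one the localized event differs from $A_s$ by an acceptable amount --- is exactly the paper's, and your treatment of $I_-$ agrees with (i). But there are two genuine gaps in the bookkeeping, both of which come down to the requirement that $\alpha,\beta,k$ depend only on $\varepsilon$ and $d$.

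First, on $I_+$ you invoke Lemma~\ref{genup} together with \eqref{ts} to get $\PP_\varrho(A_s)\ll(1-c_0s)^n$ with $c_0=c_0(x,K,\varrho,L)$, substitute $u=c_0ns$, and claim the resulting tail $\int_{c_0\alpha}^\infty e^{-u}u^{-(d-3)/(d-1)}\,du$ can be forced below $\varepsilon$ by choosing $\alpha$ large ``in $\varepsilon$ and $d$.'' That choice of $\alpha$ is impossible: the lower limit of the tail integral is $c_0\alpha$, and for bodies or boundary points with small $c_0$ you would need $\alpha\gtrsim 1/c_0$, making $\alpha$ depend on $x,K,\varrho,L$. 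The paper circumvents this by proving \eqref{thetavol}: after the reparametrization $t\mapsto s$, the $\varrho$-measure of each coordinate-corner slab $\widetilde\Theta_{i,s}$ of the cap tends to $2^{-(d-1)}s$ with a limit that is completely independent of $x,K,\varrho,L$. This gives the universal bound $\PP_\varrho\bigl(o\in K_n,\ \tilde y_s\notin K_n|L\bigr)\le 2^{d-1}(1-2^{-d}s)^n$ for $n>n_0(x,K,\varrho,L)$, and it is the universal $2^{-d}$ (not an $x$-dependent $c_0$) that allows a choice $\alpha=\alpha(\varepsilon,d)$ as in \eqref{alphadef}. Merely knowing \eqref{ts} does not produce this normalization of the per-corner contribution; you need the limit \eqref{thetavol} or an equivalent.

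Second, on $I_0$ you prove pointwise error bounds of size $\varepsilon$ for $\PP_\varrho(C_{k,s}^{\,c})$ and for $\PP_\varrho(A_{\beta,s}\setminus A_s)$, integrate against $s^{-(d-3)/(d-1)}$, arrive correctly at $O(\varepsilon\alpha^{2/(d-1)}/n^{2/(d-1)})$, and then assert this equals $O(\varepsilon/n^{2/(d-1)})$. That last equality is false: $\alpha=\alpha(\varepsilon,d)\to\infty$ as $\varepsilon\to 0$, so $\varepsilon\alpha^{2/(d-1)}$ is \emph{not} $O(\varepsilon)$. The paper's choices \eqref{betadef} and \eqref{kdef} are made precisely so that the pointwise errors in (ii) and (iii) are bounded by $\varepsilon/\alpha^{2/(d-1)}$, not $\varepsilon$. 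Your Markov estimate can be repaired by taking $k\ge\alpha^{1+2/(d-1)}\beta/\varepsilon$, and the localization error must likewise be driven below $\varepsilon\alpha^{-2/(d-1)}$ by the choice of $\beta$. A minor related point: the sandwich \eqref{Exs} produces, for a separating hyperplane through $\tilde y_s$, an excluded region of probability $\gtrsim\sqrt\beta\,s$ (as in \eqref{Omegavol}), not $\gtrsim\beta s$; the weaker lower bound is still adequate, but the exponent matters when you verify the resulting $\exp(-2^{-3d+2}\sqrt\beta\,ns)\le\varepsilon\alpha^{-2/(d-1)}$ and hence calibrate $\beta$ against $\alpha$.
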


\begin{proof} Let $\varepsilon\in (0,1)$ be given. Then $\alpha>1$ is chosen such that
\begin{equation}
\label{alphadef}
2^{d-1+\frac{2d}{d-1}}
\int_{2^{-d}\alpha}^\infty e^{-r}r^{\frac{2}{d-1}-1}\, dr<\varepsilon.
\end{equation}
Further, we choose $\beta\ge (16^2(d-1))^{d-1}$ such that
\begin{equation}
\label{betadef}
2^{d-1}e^{-2^{-3d+2}\sqrt{\beta}\cdot \varepsilon^{\frac{d-1}2}}<
\varepsilon\cdot\alpha^{\frac{-2}{d-1}}  ,
\end{equation}
and then we fix an integer $k>d$ such that
\begin{equation}
\label{kdef}
\frac{(\alpha\beta)^k}{k!}<\frac{\varepsilon}{\alpha^{\frac2{d-1}} }.
\end{equation}
Lemma~\ref{pointsneary} follows from the following three statements,
which we will prove assuming that $n$ is sufficiently large.
\begin{enumerate}
\item[(i)] $\displaystyle
\int_0^{n^{-1/2}}\PP_\varrho\left(\tilde{y}_s\not\in K_n|L \right)
s^{-\frac{d-3}{d-1}}\,ds
=\int_{\frac{\varepsilon^{(d-1)/2}}n}^{\frac{\alpha}n}
\PP_\varrho\left(\tilde{y}_s\not\in K_n|L \right)s^{-\frac{d-3}{d-1}}\,ds
+O\left(\frac{\varepsilon}{n^{\frac2{d-1}}}\right).$
\item[(ii)] If ${\varepsilon^{(d-1)/2}}/n<s<{\alpha}/n$, then
$$
\PP_\varrho\left(\#\left(\widetilde{C}(x,\beta s)\cap\Xi_n\right)\geq k \right)
\leq\frac{\varepsilon}{\alpha^{\frac2{d-1}}}.
$$
\item[(iii)] If ${\varepsilon^{{(d-1)}/2}}/n<s<{\alpha}/n$, then
$$
\PP_\varrho\left(\tilde{y}_s\not\in K_n|L \right)=
\PP_\varrho\left(\tilde{y}_s\not\in
\left[(\widetilde{C}(x,\beta s)\cap \Xi_n)|L\right] \right)
+O\left(\frac{\varepsilon}{\alpha^{\frac2{d-1}}}\right).
$$
\end{enumerate}
Before proving (i), (ii) and (iii), we note that they imply
\begin{eqnarray*}
\int_0^{n^{-1/2}}
\PP_\varrho\left(\tilde{y}_s\not\in K_n|L \right)s^{-\frac{d-3}{d-1}}\,ds
&=&\int_{\frac{\varepsilon^{(d-1)/2}}n}^{\frac{\alpha}n}
\varphi(K,L,y,\varrho,\varepsilon,s)s^{-\frac{d-3}{d-1}}\,ds+\\
&& + \,O\left(\frac{\varepsilon}{\alpha^{\frac2{d-1}}}\right)
\int_{\frac{\varepsilon^{(d-1)/2}}n}^{\frac{\alpha}n}s^{-\frac{d-3}{d-1}}\,ds
+O\left(\frac{\varepsilon}{n^{\frac2{d-1}}}\right),
\end{eqnarray*}
which in turn  yields Lemma~\ref{pointsneary}.

First, we  introduce some notation. As before,
let $Q$ be the second fundamental form at $x\in\partial K$, and let
$v_1,\ldots,v_{d-1}$ be an orthonormal basis of $u(x)^\bot$ representing
the principal directions. In addition, let
$\Theta'_1,\ldots,\Theta'_{2^{d-1}}$ be the corresponding coordinate corners,
and for $i=1,\ldots,2^{d-1}$ and $s\in(0,n^{-1/2})$, let
$$
\widetilde{\Theta}_{i,s}=
\widetilde{C}(x,s)\cap\left(\tilde{x}_s+\left[\Theta'_i,\R_+x\right]
\right).
$$
Subsequently, we show that
\begin{equation}
\label{thetavol}
\lim_{s\to 0^+}s^{-1}\int_{\widetilde{\Theta}_{i,s}\cap\partial K}\varrho(z)\,
\HH^{d-1}(dz)=2^{-(d-1)}.
\end{equation}

In fact, since a ball rolls freely inside $K$, $\varrho$ is continuous and 
positive at $x$, and by \eqref{Exs*} we deduce that
\begin{eqnarray*}
&&\lim_{s\to 0^+} s^{-1}\int_{\widetilde{\Theta}_{i,s}\cap\partial K}\varrho(z)\,
\HH^{d-1}(dz)\\
&&=\varrho(x)\lim_{s\to 0^+} s^{-1}\HH^{d-1}\left(\widetilde{\Theta}_{i,s}\cap\partial K\right)\\
&&=\varrho(x)\lim_{s\to 0^+} s^{-1}\HH^{d-1}\left(\partial K\cap\widetilde{C}(x,s)\cap
\left(\tilde{x}_s^*+[\Theta'_i,\R_+u(x)]\right)\right).
\end{eqnarray*}
Let $\Psi:\partial \Gamma\cap C(x,{r}/{R})\to\partial K\cap C(x,{r}/{R})$
be the diffeomorphism which assigns to a point $z\in\partial \Gamma\cap \widetilde{H}(x,s)$ the unique
point $\Psi(z)\in\partial K\cap\left(\tilde{x}^*_s+\R_+(z-\tilde{x}^*_s)\right)$.
It follows from \eqref{Exs} that there exists an increasing function $\mu:\R_+\to\R_+$
with $\lim_{s\to 0^+}\mu(s)=1$ such that
$$
\mu(s)^{-1}\le {\rm Lip}(\psi\vert (\partial \Gamma\cap \widetilde{C}(x,s)))\le \mu(s).
$$
Thus we get
\begin{eqnarray*}
&&\lim_{s\to 0^+} s^{-1}\HH^{d-1}\left(\partial K\cap\widetilde{C}(x,s)\cap
\left(\tilde{x}_s^*+[\Theta'_i,\R_+u(x)]\right)\right)\\
&&=\lim_{s\to 0^+} s^{-1}\HH^{d-1}\left(\Psi\left(\partial \Gamma\cap\widetilde{C}(x,s)\cap
\left(\tilde{x}_s^*+[\Theta'_i,\R_+u(x)]\right)\right)\right)\\
&&=\lim_{s\to 0^+} s^{-1}\HH^{d-1}\left(\partial \Gamma\cap\widetilde{C}(x,s)\cap
\left(\tilde{x}_s^*+[\Theta'_i,\R_+u(x)]\right)\right)\\
&&=2^{-(d-1)}\lim_{s\to 0^+} s^{-1}\HH^{d-1}\left(\partial \Gamma\cap\widetilde{C}(x,s)\right).
\end{eqnarray*}
Now we can repeat the preceding argument in reverse order and finally use \eqref{sdef} to arrive at the assertion \eqref{thetavol}.

To prove (i), we observe that
$$
\int_0^{\frac{\varepsilon^{(d-1)/2}}n}
\PP_\varrho\left(\tilde{y}_s\not\in K_n|L \right)s^{-\frac{d-3}{d-1}}\,ds\leq
\int_0^{\frac{\varepsilon^{(d-1)/2}}n}s^{-\frac{d-3}{d-1}}\,ds\ll
\frac{\varepsilon}{n^{\frac2{d-1}}}.
$$
Let ${\alpha}/n<s<n^{-1/2}$, and let
$n$ be sufficiently large.
First, \eqref{except} yields that 
$$
\PP_\varrho\left(o\not\in K_n,\;\tilde{y}_s\not\in K_n|L \right)\leq
\varepsilon n^{-\frac{2}{d-1}}.
$$
On the other hand, if $o\in K_n$, then $\tilde{y}_s\not\in K_n|L$ implies that 
$\widetilde{\Theta}_{i,s}\cap K_n=\emptyset$ for
some $i\in \{1, \ldots , 2^{d-1}\}$, and hence
 (\ref{thetavol}) yields
\begin{equation}
\label{est}
\PP_\varrho\left(o\in K_n,\;\tilde{y}_s\not\in K_n|L \right)\leq
2^{d-1}(1-2^{-d}s)^n<2^{d-1}e^{-2^{-d}ns}.
\end{equation}
Therefore,  by \eqref{alphadef} we get
\begin{eqnarray*}
\int_{\alpha/n}^{n^{-1/2}}
\PP_\varrho\left(\tilde{y}_s\not\in K_n|L \right)s^{-\frac{d-3}{d-1}}\,ds
&\ll& 2^{d-1}\int_{\alpha/n}^\infty e^{-2^{-d}ns}s^{\frac{2}{d-1}-1}\, ds
+\frac{\varepsilon}{n^{\frac2{d-1}}}\\
&=& \frac{2^{d-1+\frac{2d}{d-1}}}{n^{\frac{2}{d-1}}}
\int_{2^{-d}\alpha}^\infty e^{-r}
r^{\frac{2}{d-1}-1}\, dr+\frac{\varepsilon}{n^{\frac2{d-1}}}\\
&\leq& \frac{2\varepsilon}{n^{\frac2{d-1}}},
\end{eqnarray*}
which verifies (i).

Next (ii) simply follows from \eqref{sdef} and
\eqref{kdef}. In fact, if $0<s< {\alpha}/n$, then
$$
\PP_\varrho\left(\#\left(\widetilde{C}(x,\beta s)\cap
\Xi_n\right)\geq k \right)\leq \binom{n}{k}(\beta s)^k
\le \binom{n}{k}\left(\frac{\alpha\beta}n\right)^k<
\frac{(\alpha\beta)^k}{k!}\le \frac{\varepsilon}{\alpha^{\frac2{d-1}}}.
$$

Finally, we prove (iii).
To this end, if ${\varepsilon^{(d-1)/2}}/n<s<{\alpha}/n$ 
and $i\in\{1,\ldots,2^{d-1}\}$, then
we define $w_i\in \Theta'_i$ by
\begin{equation}
\label{widef}
w_i:=\left(\sqrt{\beta}s\right)^{\frac{1}{d-1}}
\sum_{m=1}^{d-1}\frac{\eta_m G(x)}{4\sqrt{(d-1)k_m(x)}}\,v_m,
\end{equation}
where $\eta_m=\eta_m^i\in\{- 1,1\}$ for $m=1,\ldots,2^{d-1}$. Now let
$$
\widetilde{\Omega}_{i,s}:=
\partial K \cap
[\tilde{x}_{s}+\Theta'_i,
\tilde{x}_{\sqrt{\beta}\, s}+w_i+\Theta'_i].
$$
We claim that  for large $n$,
if $\tilde{y}_s\in K_{n}|L$
but $\tilde{y}_s\not\in\left[(\widetilde{C}(x,\beta s)\cap\Xi_n)|L\right]$,
then  there exists $i\in \{1,\ldots,2^{d-1}\}$ such that
\begin{equation}
\label{Omegaempty}
\Xi_n\cap\widetilde{\Omega}_{i,s}=\emptyset.
\end{equation}
Moreover, for all $i=1,\ldots,2^{d-1}$, we have
\begin{equation}
\label{Omegavol}
\int_{\widetilde{\Omega}_{i,s}}\varrho(z)\,\HH^{d-1}(dz)\geq
2^{-3d+2}\sqrt{\beta} s.
\end{equation}
To justify \eqref{Omegavol}, let $i\in\{1,\ldots,2^{d-1}\}$ be fixed.
It follows from the definition of $w_i$ that
$$
w_i\in\left(\sqrt{\beta}s\right)^{\frac{1}{d-1}}
\frac{G(x)}{4}\cdot \partial E.
$$
Recall that $\pi:\R^d\to u(x)^\perp$ denotes the orthogonal projection 
to $u(x)^\perp$. If $n$ is large enough,
and hence $0< s<{\alpha}/n$ is sufficiently small,
then \eqref{Exs*}, \eqref{Exsb} and \eqref{widef}
 yield that $w_i\in \pi(\widetilde{\Omega}_{i,s})$, since 
by assumption $\sqrt{\beta}^{1/(d-1)}/4>2$, and therefore
$$
(w_i+\Theta'_i)\cap \left(w_i+\left(\sqrt{\beta}s\right)^{\frac{1}{d-1}}
\frac{G(x)}{4}\cdot E\right)\subset\pi(\widetilde{\Omega}_{i,s}).
$$
In particular, \eqref{Omegavol} now follows from
\begin{eqnarray*}
\int_{\widetilde{\Omega}_{i,s}}\varrho(z)\,\HH^{d-1}(dz)
&\geq&\frac{\varrho(x)}2 \cdot \mathcal{H}^{d-1}(\widetilde{\Omega}_{i,s})\\
&\ge &\frac{\varrho(x)}2 \cdot \mathcal{H}^{d-1}(\pi(\widetilde{\Omega}_{i,s}))\\
&\geq&\frac{\varrho(x)}2 \cdot \frac{1}{2^{d-1}}\sqrt{\beta}s\frac{G(x)^{d-1}}{4^{d-1}}
\alpha_{d-1}H_{d-1}(x)^{-1/2}\\
&=&2^{-d}4^{1-d}\sqrt{\beta}s.
\end{eqnarray*}
Next we verify \eqref{Omegaempty}. 
We assume that $\tilde{y}_s\in K_{n}|L$ 
but $\tilde{y}_s\not\in \left[(\widetilde{C}(x,\beta s)\cap\Xi_n)|L\right]$. 
Then there exist 
$a\in \left[(\widetilde{C}(x,\beta s)\cap\Xi_n)|L\right]$
and $b\in \left(K_{n}\setminus \widetilde{C}(x,\beta s)\right)|L$ such that
$\tilde{y}_s\in (a,b)$. Thus
there exists a hyperplane $H$ in $\R^d$  containing $\tilde{y}_s+L^\bot$ and 
bounding the halfspaces $H^+$ and $H^-$ such that
$\widetilde{C}(x,\beta s)\cap\Xi_n\subset {\rm int}(H^+)$ and
$b\in {\rm int}(H^-)$. In addition, there exists 
$i\in \{1,\ldots,2^{d-1}\}$ such that
\begin{equation}
\label{corner}
\tilde{x}_s+\Theta'_i\subset H^-.
\end{equation}
Now we define points $q$ and $q'$ by 
$$
\{q\}=[\tilde{y}_s,b]\cap\widetilde{H}(x,\sqrt{\beta} s),\qquad 
\{q'\}=[\tilde{y}_s,b]\cap\widetilde{H}(x,{\beta} s).
$$
Relation \eqref{Exs} implies that
$$
\widetilde{H}(x,{\beta} s)\cap K\subset\tilde{x}^*_{\beta s}+
2 G(x)(\beta s)^{\frac{1}{d-1}} E
$$
if $s> 0$ is sufficiently small. Arguing as in \cite{BFH}, we obtain that
$$
\langle u(x),\tilde{y}_s-\tilde{y}_{\beta s}\rangle 
<\frac{\beta^{1/(d-1)}}{\beta^{1/(d-1)}-1}
\langle u(x),\tilde{y}_{\sqrt{\beta}s}-\tilde{y}_{\beta s}\rangle
$$
and
$$
\frac{\|q-\tilde{y}_{\sqrt{\beta}s}\|}{\|q'-\tilde{y}_{{\beta}s}\|}=
\frac{\langle u(x),\tilde{y}_s-\tilde{y}_{\sqrt{\beta} s}\rangle}{
\langle u(x),\tilde{y}_s-\tilde{y}_{\beta s}\rangle},
$$
which yields (cf.\ \cite{BFH})
$$
q\in \tilde{y}_{\sqrt{\beta} s}+2s^{\frac{1}{d-1}}G(x) E.
$$
Since $\beta\ge [8^2(d-1)]^{d-1}$, we thus arrive at
\begin{equation}\label{newstar}
q\in \tilde{y}_{\sqrt{\beta} s}+\frac{1}{4\sqrt{d-1}}
(\sqrt{\beta}s)^{\frac{1}{d-1}}G(x)E.
\end{equation}
Now \eqref{corner} implies that $q+\Theta_i'\subset H^-$. 
Hence it follows from \eqref{newstar} that $\tilde{y}_{\sqrt{\beta} s}
+w_i\subset q+\Theta_i'\subset H^-$, and therefore 
also $\tilde{y}_{\sqrt{\beta} s}+w_i+\Theta_i'\subset H^-$. Thus 
$\widetilde{\Omega}_{i,s}\subset H^-$, which yields 
$\Xi_n\cap \widetilde{\Omega}_{i,s}=\emptyset$. 

Assertion (iii) follows from \eqref{Omegaempty} and \eqref{Omegavol}. In fact, 
if ${\varepsilon^{{(d-1)}/2}}/n<s<{\alpha}/n$, then
\begin{eqnarray*}
&&\PP_\varrho\left(\tilde{y}_s\not\in\left[(\widetilde{C}(y,\beta s)\cap
\Xi_n)|L\right] \right)
-\PP_\varrho\left(\tilde{y}_s\not\in (K_{n}|L) \right)\\
&&\leq \sum_{i=1}^{2^{d-1}}
\left(1-\int_{\widetilde{\Omega}_{i,s}}\varrho(z)\,\HH^{d-1}(dz) \right)^n\\
&&\leq 2^{d-1}e^{-2^{-3d+2}\sqrt{\beta}\cdot sn}\\
&&\leq {\varepsilon}\,{\alpha^{-\frac2{d+1}} },
\end{eqnarray*}
by the choice of $\beta$.
\end{proof}

To actually compare the situation near the normal 
boundary point $x$ of $K$ with $H_{d-1}(x)>0$ to
the  case of the unit ball,
let $\sigma=(d\alpha_d)^{-1}$ be the constant density 
of the corresponding probability distribution on $S^{d-1}$. 
Let $w\in S^{d-1}$ be the $d$-th coordinate vector in $\R^d$,
and hence $\R^{d-1}=w^\bot$.
We write $B_n$ to
denote the convex hull of $n$ random points
distributed uniformly and independently
 on $S^{d-1}$ according to $\sigma$.
For $s\in (0,\frac12)$, we fix a linear subspace 
$L_0\in {\mathcal L}_j^d$ with $w\in L_0$, and
let $\tilde{w}_s$ be of the form $\lambda w$ for $\lambda\in(0,1)$ such that
$$
(d\alpha_d)^{-1}\cdot \HH^{d-1}(\{z\in S^{d-1}:\,\langle z,w\rangle\geq
\langle \tilde{w}_s,w\rangle\})=s.
$$
In particular,  $\tilde{w}_s|L_0=\tilde{w}_s$.

\begin{lemma}
\label{compareball}
If $L\in {\mathcal L}_j^d$,
$y\in\partial (K|L)$ and $x\in\partial K$ is a normal boundary 
point such that $y=x|L$ and $H_{d-1}(x)>0$, then
$$
\lim_{n\to\infty}\int_0^{n^{-1/2}}n^{\frac2{d-1}}
\PP_\varrho\left(\tilde{y}_s\not\in K_n|L \right)s^{-\frac{d-3}{d-1}}\,ds
=\lim_{n\to\infty}\int_0^{n^{-1/2}}n^{\frac2{d-1}}
\PP_\sigma\left(\tilde{w}_s\not\in B_n|L_0 \right)s^{-\frac{d-3}{d-1}}\,ds.
$$
\end{lemma}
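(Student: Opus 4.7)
The plan is to apply Lemma~\ref{pointsneary} to both sides, change variables $s=\tau/n$ to land in a fixed interval, and then show that after an anisotropic affine rescaling centered at $x$ (resp.\ at $w$) both local configurations converge to the same standard paraboloid-cap point process, so that the separating-hyperplane probabilities defining $\varphi$ coincide in the limit.

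\textbf{Reduction via Lemma~\ref{pointsneary}.}
Fix $\varepsilon\in(0,1)$ and apply Lemma~\ref{pointsneary} with the same $\varepsilon$ to both $(K,L,y,\varrho)$ and $(B^d,L_0,w,\sigma)$, which produces the same constants $\alpha,\beta>1$ and $k>d$. Each of the two integrals in the statement becomes
\[
\lim_{n\to\infty}n^{\frac{2}{d-1}}\!\!\int_{\varepsilon^{(d-1)/2}/n}^{\alpha/n}\!\!\varphi(\,\cdot\,,s)\,s^{-\frac{d-3}{d-1}}\,ds+O(\varepsilon).
\]
Substituting $s=\tau/n$, the factor $n^{2/(d-1)}$ is absorbed and both integrals run over the fixed interval $[\varepsilon^{(d-1)/2},\alpha]$. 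Since $\varphi\in[0,1]$ and $\tau^{-(d-3)/(d-1)}$ is integrable there, dominated convergence reduces the problem to showing, for each fixed $\tau\in[\varepsilon^{(d-1)/2},\alpha]$,
\[
\lim_{n\to\infty}\bigl(\varphi(K,L,y,\varrho,\varepsilon,\tau/n)-\varphi(B^d,L_0,w,\sigma,\varepsilon,\tau/n)\bigr)=0,
\]
after which letting $\varepsilon\to 0^+$ disposes of the remaining error term.

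\textbf{Affine rescaling of the cap.}
Fix $\tau$, set $s=\tau/n$, and let $A_s$ be the affine map that translates $\tilde x_s$ to the origin, then applies the transformation $T$ from Section~3 (which normalizes the Dupin indicatrix of $K$ at $x$ to a Euclidean ball in $u(x)^\perp$), and finally dilates by $s^{-1/(d-1)}$ perpendicular to $u(x)$ and by $s^{-2/(d-1)}$ along $u(x)$. By \eqref{Exs}--\eqref{Exsb} together with $\tilde{\mu}(s)\to 1$, the image $A_s(\widetilde C(x,\beta s))$ converges in Hausdorff metric to the standard cap $\Gamma_\beta$ of the rotationally symmetric paraboloid $\Gamma^\ast=\{z-\tfrac12\|z\|^2 u(x):z\in u(x)^\perp\}$. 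Decomposing $\varphi(K,\ldots)$ by the cardinality $j\in\{0,\ldots,k\}$ of $\Xi_n\cap\widetilde C(x,\beta s)$, the binomial weight $\binom{n}{j}(\beta s)^j(1-\beta s)^{n-j}$ converges to the Poisson weight $e^{-\beta\tau}(\beta\tau)^j/j!$, and the conditional distribution of these $j$ points, pushed forward by $A_s$, converges (using continuity of $\varrho$ at $x$ and \eqref{dts}--\eqref{ts}) to the uniform distribution on $\Gamma_\beta\cap\partial\Gamma^\ast$. The same argument for $B^d$ with density $\sigma$ (where $T$ is the identity and the rescaling is isotropic) yields the same limiting conditional distribution.

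\textbf{Matching of events and conclusion.}
The event defining $\varphi$ is the existence of a hyperplane $H\supset\tilde y_s+L^\perp$ strictly separating $\tilde y_s$ from $\Xi_n\cap\widetilde C(x,\beta s)$, which is affine-invariant. Since $u(x)\in L$ (recall that $u(x)$ is the exterior unit normal to $\partial(K|L)$ at $y$) and $T$ fixes the $u(x)$-axis, $A_s(L)$ is a $j$-dimensional subspace of $\R^d$ containing that axis, and $A_s(\tilde y_s)$ converges to a point on the axis depending only on $\tau$. The analogous structure holds in the ball case because $w\in L_0$. Rotational symmetry of $\Gamma^\ast$ around $u(x)$, together with rotational invariance of the uniform intensity on $\Gamma_\beta\cap\partial\Gamma^\ast$, makes the separating-hyperplane probability depend only on the dimension $j$ and on the axial position of the target point; these data agree in the two cases. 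Summing the products of Poisson weights and matched conditional probabilities over $j\le k$ yields the required pointwise equality of limits, which combined with the reduction step completes the proof. The \textbf{main obstacle} is verifying that the rotational symmetry of $\Gamma^\ast$ indeed eliminates the dependence on the particular $j$-dimensional subspace through the axis---even though $A_s$ distorts $L\cap u(x)^\perp$ inside $u(x)^\perp$---and this is precisely where the normalization through $T$ (rather than a generic affine dilation) is essential, alongside the Hausdorff convergence of the caps and the uniform convergence of the conditional densities.
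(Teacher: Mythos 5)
Your high-level strategy coincides with the paper's: apply Lemma~\ref{pointsneary} to both sides, decompose by the number of random points landing in the cap $\widetilde C(\cdot,\beta s)$, rescale affinely so the caps approach the standard paraboloid cap, and compare the resulting separating-hyperplane probabilities. Two of your steps differ in bookkeeping but are plausible: you take a Poisson limit of the binomial weights after substituting $s=\tau/n$ and invoke dominated convergence in $\tau$, whereas the paper keeps the exact binomial and evaluates the outer $s$-integral in closed form via the beta/gamma computation \eqref{Artin}, which avoids having to justify a uniform pointwise limit of $\varphi(\cdot,\tau/n)$.

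The genuine gap is precisely the one you flag as the ``main obstacle,'' and your proposal does not close it. After your affine map $A_s$ the subspace $L$ becomes $A_s(L)$, a $j$-plane containing the $u(x)$-axis but in general distinct from $L_0$, so the two conditional probabilities in your comparison still refer to projections onto different subspaces. You propose to kill this discrepancy via rotational symmetry of the limiting paraboloid cap. However, rotational symmetry holds only in the limit: for each finite $s$ the rescaled cap and the pushed-forward density are not rotationally symmetric, so one would still need a uniform-continuity argument showing that the separating-hyperplane functional, as a function of the point configuration \emph{and} the choice of $j$-plane through the axis, converges uniformly under the weak convergence of the conditional process. The paper sidesteps this completely by inserting into $\Phi_s$ an orthogonal map $P_s$ with $P_sw=w$ and $P_s\circ A_s(L^\perp)=L_0^\perp$, so that the projection subspaces coincide \emph{before} passing to the limit; the remaining comparison is an $L^1$ closeness of the two densities $\tilde\varrho_s,\tilde\sigma_s$ on a common space $\Xi_s$ (\eqref{rhosigma}), plus a union bound \eqref{KnoB}--\eqref{BnoK} over near-degenerate $j$-tuples to control the symmetric difference of the two projected-hull events. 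To make your proof rigorous you would either need to supply that uniform-continuity step or adopt the paper's device of aligning the subspaces exactly with $P_s$. You also omit the minor case distinction for $d=2$ noted at the end of the paper's proof.
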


\begin{proof} First, we assume $d\geq 3$.
It is sufficient to prove
that for any $\varepsilon\in(0,1)$ 
there exists $n_0>0$, depending on $\varepsilon,x,K,\varrho, L$, 
such that if $n>n_0$, then
\begin{equation}
\label{Kballeps}
\int_0^{n^{-1/2}}
\PP_\varrho\left(\tilde{y}_s\not\in K_n|L \right)s^{-\frac{d-3}{d-1}}\,ds
=\int_0^{n^{-1/2}}
\PP_\sigma\left(\tilde{w}_s\not\in B_n|L_0 \right)s^{-\frac{d-3}{d-1}}\,ds
+O\left(\frac{\varepsilon}{n^{\frac2{d-1}}}\right).
\end{equation}
 Let $\alpha$, $\beta$ and
$k$ be the quantities associated with $\varepsilon,x,K,\varrho, L$ 
in Lemma~\ref{pointsneary}, let
$\widetilde{C}(x,s)$ denote the cap of $K$ defined in
\eqref{capK}, and let $\widetilde{C}(w,s)$ denote the
corresponding cap of $B^d$ at $w$.
We define the densities  $\varrho_s$
on $\partial\widetilde{C}(x,\beta s)$ and $\sigma_s$
on $\partial\widetilde{C}(w,\beta s)$ of probability distributions 
 by
\begin{eqnarray*}
\varrho_s(z)&=&\left\{
\begin{array}{rl}
\varrho(z)/(\beta s),&
\mbox{ \ if $z\in\partial K\cap \widetilde{C}(x,\beta s)$},\\
0,&\mbox{ \ if $z\in\partial \widetilde{C}(x,\beta s)\backslash\partial K$},\\
\end{array}\right.\\
\sigma_s(z)&=&\left\{
\begin{array}{rl}
\sigma(z)/(\beta s),&
\mbox{ \ if $z\in S^{d-1}\cap \widetilde{C}(w,\beta s)$},\\
0,&\mbox{ \ if $z\in\partial \widetilde{C}(w,\beta s)\backslash S^{d-1}$}.\\
\end{array}\right. 
\end{eqnarray*}
For $i=0,\ldots,k$,
we write $\widetilde{C}(x,\beta s)_i$ and  $\widetilde{C}(w,\beta s)_i$ to
denote the convex hulls of $i$ random points
distributed uniformly and independently
 on $\partial \widetilde{C}(x,\beta s)$
and $\partial \widetilde{C}(w,\beta s)$ according to
$\varrho_s$ and
$\sigma_s$, respectively.

If $n$ is large, then  Lemma~\ref{pointsneary} yields
that the left-hand and the right-hand side of \eqref{Kballeps}
are
$$
O\left(\frac{\varepsilon}{n^{\frac2{d-1}}}\right)+
\sum_{i=0}^k
\binom{n}{i}\int_{\frac{\varepsilon^{{(d-1)}/2}}n}^{\frac{\alpha}n}
(\beta s)^i(1-\beta s)^{n-i}
\times\,
\PP_{\varrho_{s}}
\left(\tilde{y}_s\not\in \widetilde{C}(x,\beta s)_i|L \right)
s^{-\frac{d-3}{d-1}}\,ds,
$$
$$
O\left(\frac{\varepsilon}{n^{\frac2{d-1}}}\right)+
\sum_{i=0}^k
\binom{n}{i}\int_{\frac{\varepsilon^{{(d-1)}/2}}n}^{\frac{\alpha}n}
(\beta s)^i(1-\beta s)^{n-i}
\times\,
\PP_{\sigma_{s}}
\left(\tilde{w}_s\not\in \widetilde{C}(w,\beta s)_i|L_0 \right)
s^{-\frac{d-3}{d-1}}\,ds.
$$
For each $i\leq k$, the representation of 
the beta function by the gamma function and the Stirling formula
(see E. Artin \cite{Art64}) imply 
\begin{equation}
\label{Artin}
\lim_{n\to\infty}n^{\frac2{d-1}}\binom{n}{i}\int_{0}^{1/\beta}
(\beta s)^i(1-\beta s)^{n-i}s^{-\frac{d-3}{d-1}}\,ds
=\frac{\beta^{\frac{-2}{d-1}}\Gamma\left(i+\frac2{d-1}\right)}{i!}<1.
\end{equation}
Therefore to prove \eqref{Kballeps}, it is sufficient
to verify that for each $i=0,\ldots,k$, if $s>0$
is small, then
\begin{equation}
\label{Kballeps0}
\left|
\PP_{\varrho_{s}}
\left(\tilde{y}_s\not\in \widetilde{C}(x,\beta s)_i|L \right)-
\PP_{\sigma_{s}}
\left(\tilde{w}_s\not\in \widetilde{C}(w,\beta s)_i|L_0 \right)
\right| \ll \frac{\varepsilon}{k}.
\end{equation}
If $i\leq j$, then \eqref{Kballeps0} readily holds
as its left-hand side is zero.

 To prove \eqref{Kballeps0} if $i\in\{j+1,\ldots,k\}$,
we transform both $K$ and $B^d$ in such a way that their osculating paraboloid is
$\Omega=\{z-\|z\|^2\,w:\,z\in\R^{d-1}\}$, and the images
of the caps $\widetilde{C}(x,\beta s)$ and $\widetilde{C}(w,\beta s)$
are very close. Using these caps, we construct equivalent
representations of $\PP_{\varrho_{s}}
\left(\tilde{y}_s\not\in \widetilde{C}(x,\beta s)_i|L \right)$
and $\PP_{\sigma_{s}}
\left(\tilde{w}_s\not\in \widetilde{C}(w,\beta s)_i|L_0 \right)$, 
based on the same space $\Xi_s$ and on 
 comparable probability measures and random variables.

We may assume that $u(x)=w$.
Let $v_1,\ldots,v_{d-1}$ be an orthonormal basis
of $w^\bot$ in the principal directions
of the fundamental form $Q$ of $K$ at $x\in\partial K$. We define
the linear transform $A_s$ of $\R^d$ by
\begin{eqnarray*}
A_s(w)&=&2(\beta s)^{\frac{-2}{d-1}}G(x)^{-2}w,\\
A_s(v_i)&=&(\beta s)^{\frac{-1}{d-1}}\sqrt{k_i(x)}G(x)^{-1}v_i,\;\;
i=1,\ldots,d-1,
\end{eqnarray*}
and choose an orthonormal linear transform
$P_s$ such that $P_sw=w$, and $P_s\circ A_s(L^\bot)=L_0^\bot$.
Based on these linear transforms, let $\Phi_s$ be
the affine transformation
$$
\Phi_s(z)=P_s\circ A_s(z-x).
$$
In addition, we define the linear transform $R_s$ of $\R^d$ by
\begin{eqnarray*}
R_s(w)&=&2(\beta s)^{\frac{-2}{d-1}}
\left(\frac{\alpha_{d-1}}{d\alpha_d}\right)^{\frac{2}{d-1}}w,\\
R_s(v_i)&=&(\beta s)^{\frac{-1}{d-1}}
\left(\frac{\alpha_{d-1}}{d\alpha_d}\right)^{\frac{1}{d-1}}v_i,\;\;
i=1,\ldots,d-1,
\end{eqnarray*}
and  let $\Psi_s$ be
the affine transformation
$$
\Psi_s(z)=R_s(z-x).
$$
Subsequently, we also write $\Phi_s z$ for $\Phi_s(z)$ or $\Phi_s z|L_0$ for 
$\Phi_s(z)|L_0$, and similarly for $\Psi_s$. 
We observe that $\Omega$ is
the osculating paraboloid of both $\Phi_sK$ and
$\Psi_s B^d$ at $o$, and
\begin{eqnarray*}
\lim_{s\to 0^+}\Phi_s\tilde{x}_s=\lim_{s\to 0^+}\Psi_s\tilde{w}_s&=&
-\beta^{\frac{-2}{d-1}}w=:w^*,\\
\lim_{s\to 0^+}\Phi_s\widetilde{C}(x,\beta s)=
\lim_{s\to 0^+}\Psi_s\widetilde{C}(w,\beta s)&=&
\{z-\tau\,w:\,z\in B^{d-1}\mbox{ and }\|z\|^2\leq \tau\leq 1\}.
\end{eqnarray*}
For $p\in \widetilde{C}(x,\beta s)\cap\partial K$
and $z=\pi\circ\Phi_s(p)$, let $D(p)$ be the Jacobian
of $\pi\circ\Phi_s$ at $p$ as a map
$\pi\circ\Phi_s:\widetilde{C}(x,\beta s)\cap\partial K\to\R^{d-1}$,
and let
$$
\tilde{\varrho}_s(z)=\varrho_s(p)\cdot D(p)^{-1}.
$$
In addition, for $p\in \widetilde{C}(w,\beta s)\cap S^{d-1}$
and $z=\pi\circ\Psi_s(p)$, let $\widetilde{D}(p)$ be the Jacobian
of $\pi\circ\Psi_s$ at $p$ as a map
$\pi\circ\Psi_s:\widetilde{C}(w,\beta s)\cap S^{d-1}\to\R^{d-1}$,
and let
$$
\tilde{\sigma}_s(z)=\sigma_s(p)\cdot \widetilde{D}(p)^{-1}.
$$
We define
$$
\Xi_s=\left[\pi\circ\Phi_s\widetilde{C}(x,\beta s)\right]
\cup \left[\pi\circ\Psi_s\widetilde{C}(w,\beta s)\right],
$$
and extend $\tilde{\varrho}_s$ and $\tilde{\sigma}_s$
to $\Xi_s$ by
\begin{eqnarray*}
 \tilde{\varrho}_s(z)&=&0,\mbox{ \ if \ }
z\in \left[\pi\circ\Psi_s\widetilde{C}(w,\beta s)\right]
\backslash \left[\pi\circ\Phi_s\widetilde{C}(x,\beta s)\right],\\
\tilde{\sigma}_s(z)&=&0,\mbox{ \ if \ }
\left[\pi\circ\Phi_s\widetilde{C}(x,\beta s)\right]
\backslash \left[\pi\circ\Psi_s\widetilde{C}(w,\beta s)\right].
\end{eqnarray*}
Therefore $\tilde{\varrho}_s$ and $\tilde{\sigma}_s$
are densities of probability distributions on $\Xi_s$.
For $z\in\Xi_s$, let $\varphi_s(z)\in \Phi_s\partial K$
and  $\psi_s(z)\in \Psi_s S^{d-1}$ be the points near $z$ whose
orthogonal projection into $\R^{d-1}$ is $z$.
For random variables $z_1,\ldots,z_i\in\Xi_s$
either with respect to
$\tilde{\varrho}_s$ or $\tilde{\sigma}_s$, the
quantities above were defined so as to satisfy
\begin{eqnarray}
\label{KXi}
 \PP_{\varrho_{s}}
\left(\tilde{y}_s\not\in \widetilde{C}(x,\beta s)_i|L \right)&=&
\PP_{\tilde{\varrho}_{s}}
\left(\Phi_s\tilde{x}_s|L_0\not\in
[\varphi_s(z_1),\ldots,\varphi_s(z_i)]|L_0 \right) ,\\
\label{BXi}
 \PP_{\sigma_{s}}
\left(\tilde{w}_s\not\in \widetilde{C}(w,\beta s)_i|L \right)&=&
\PP_{\tilde{\sigma}_{s}}
\left(\Psi_s\tilde{w}_s\not\in
[\psi_s(z_1),\ldots,\psi_s(z_i)]|L_0 \right).
\end{eqnarray}
Now there exists an increasing function $s\mapsto \mu^*(s)$
with $\lim_{s\to 0^+}\mu^*(s)=1$ such that
$$
\mu^*(s)^{-1}B^{d-1}\subset
\left[\pi\circ \Phi_s\widetilde{C}(x,\beta s)\right]
\cap \left[\pi\circ\Psi_s\widetilde{C}(w,\beta s)\right]
\subset \Xi_s \subset
 \mu^*(s)B^{d-1},
$$
we have $\mu^*(s)^{-1}\varphi_s(z)\leq\psi_s(z)\leq
\mu^*(s)\varphi_s(z)$ for all $z\in\Xi_s$,
and
\begin{eqnarray*}
\mu^*(s)^{-1}\alpha_{d-1}^{-1}&\leq \tilde{\varrho}_s(z)\leq &
\mu^*(s)\alpha_{d-1}^{-1}, \quad\mbox{ \ if } 
z\in\pi\circ\Phi_s\widetilde{C}(x,\beta s),\\
\mu^*(s)^{-1}\alpha_{d-1}^{-1}&\leq \tilde{\sigma}_s(z)\leq &
\mu^*(s)\alpha_{d-1}^{-1}, \quad\mbox{ \ if } 
z\in\pi\circ\Psi_s\widetilde{C}(w,\beta s).
\end{eqnarray*}
Therefore
\begin{equation}
\label{rhosigma}
\lim_{s\to 0^+}\int_{\Xi_s}|\tilde{\varrho}_s(z)- \tilde{\sigma}_s(z)|\,\HH^{d-1}(dz)=0.
\end{equation}
From \eqref{rhosigma} we deduce that if $s>0$ is small, then
\begin{eqnarray}
\label{noKB}
\left|\PP_{\tilde{\varrho}_{s}}
\left(\Phi_s\tilde{x}_s|L_0\not\in
[\varphi_s(z_1),\ldots,\varphi_s(z_i)]|L_0 \mbox{ and }
\Psi_s\tilde{w}_s\not\in
[\psi_s(z_1),\ldots,\psi_s(z_i)]|L_0\right)-\right.&& \\
\nonumber
\left.\PP_{\tilde{\sigma}_{s}}
\left(\Phi_s\tilde{x}_s|L_0\not\in
[\varphi_s(z_1),\ldots,\varphi_s(z_i)]|L_0 \mbox{ and }
\Psi_s\tilde{w}_s\not\in
[\psi_s(z_1),\ldots,\psi_s(z_i)]|L_0\right)  \right|
&\leq&\frac{\varepsilon}{k}.
\end{eqnarray}
Next, if $s>0$ is small, then
$$
\left\|w^*-\Phi_s\tilde{x}_s\right\|
\leq\frac{\varepsilon}{k^{j+1}}\quad\mbox{ \ and \ }\quad 
\left\|w^*-\Psi_s\tilde{w}_s\right\|
\leq\frac{\varepsilon}{k^{j+1}},
$$
and in addition
$$
\|\varphi_s(z)-\psi_s(z)\|\leq \frac{\varepsilon}{ k^{j+1}}
\quad \mbox{ \ \ for all }z\in\Xi_s.
$$
Let us assume that $\Phi_s\tilde{x}_s|L_0\not\in
[\varphi_s(z_1),\ldots,\varphi_s(z_i)]|L_0$
but $\Psi_s\tilde{w}_s\in [\psi_s(z_1),\ldots,\psi_s(z_i)]|L_0$
for some $z_1,\ldots,z_i\in\Xi_s$. In this case, the point $a$ of 
 $[\varphi_s(z_1),\ldots,\varphi_s(z_i)]|L_0$
closest to $\Phi_s\tilde{x}_s|L_0$ is
contained in some $(j-1)$-simplex
$[\varphi_s(z_{m_1}),\ldots,\varphi_s(z_{m_j})]|L_0$, i.e.\ there are 
$\lambda_1,\ldots,\lambda_j\ge 0$, $\lambda_1+\ldots+\lambda_j=1$, such that 
$a=\sum_{r=1}^j \lambda_r\varphi(z_{m_r})|L_0$. Moreover, there are 
$\mu_1,\ldots,\mu_i\ge 0$, $\mu_1+\ldots+\mu_i=1$, so that $\Psi_s\tilde{w}_s=
\sum_{r=1}^i\mu_r\psi_s(z_r)|L_0$. Then we have
\begin{eqnarray*}
\|\Phi_s \tilde{x}_s|L_0-a\|&\le & \left\|\Phi_s \tilde{x}_s|L_0-\sum_{r=1}^i\mu_r\varphi_s(z_r)|L_0\right\|\\
&\le &\|\Phi_s \tilde{x}_s|L_0-w^*\|+\|w^*-\Psi_s \tilde{w}_s\|+\left\|\sum_{r=1}^i\mu_r(\psi_s(z_r)-\varphi_s(z_r))|L_0\right\|\\
&\le&\frac{\varepsilon}{k^{j+1}}+\frac{\varepsilon}{k^{j+1}}+\frac{\varepsilon}{k^{j+1}}=\frac{3\varepsilon}{k^{j+1}},
\end{eqnarray*}
and hence
$$
\|w^*-a\|\le \frac{4\epsilon}{k^{j+1}}.
$$
Choose a maximal set $v_1,\ldots,v_l\in S^{d-1}\cap L_0$ such that
the distance between any two points is at least
$\varepsilon k^{-(j+1)}$, in particular
$$
l\ll \varepsilon^{-(j-1)} k^{(j-1)(j+1)}.
$$
Since $a,\varphi_s(z_{m_1})|L_0,\ldots,\varphi_s(z_{m_j})|L_0$ 
lie in a $(j-1)$-dimensional affine subspace of $L_0$, there is a unit vector 
$v\in S^{d-1}\cap L_0$ such that $|\langle \varphi_s(z_{m_r})-w^*,v\rangle|\le  4\varepsilon k^{-(j+1)}$ for $r=1,\ldots,j$, and thus 
$$|\langle \varphi_s(z_{m_r})-w^*,v_m\rangle|\le \frac{6\varepsilon}{k^{j+1}}$$  
for $r=1,\ldots,j$ and a 
suitably chosen $m\in \{1,\ldots,l\}$. In fact, for the given vector $v\in S^{d-1}\cap L_0$, there 
is some $m\in\{1,\ldots,l\}$ such that $\|v-v_m\|\le \varepsilon k^{-(j+1)}$. Since 
$\Phi_s\widetilde{C}(x,\beta s)\subset w^*+2B^d$, we deduce that 
\begin{eqnarray*}
|\langle \varphi_s(z_{m_r})-w^*,v_m\rangle|&\le &|\langle \varphi_s(z_{m_r})-w^*,v\rangle|
+\|\varphi_s(z_{m_r})-w^*\|\cdot\|v_m-v \|\\
&\le &\frac{4\epsilon}{k^{j+1}}+2\cdot \frac{\epsilon}{k^{j+1}}=\frac{6\epsilon}{k^{j+1}}.
\end{eqnarray*}
Therefore, if we define, for $m=1,\ldots,l$,
$$
\Pi_m:=\left\{p\in
\partial\Phi_s\widetilde{C}(x,\beta s):\,
|\langle p-w^*,v_m\rangle|\leq 6\varepsilon k^{-(j+1)}\right\},
$$
we get the following: if $\Phi_s\tilde{x}_s|L_0\not\in
[\varphi_s(z_1),\ldots,\varphi_s(z_i)]|L_0$
but $\Psi_s\tilde{w}_s\in [\psi_s(z_1),\ldots,\psi_s(z_i)]|L_0$
for some $z_1,\ldots,z_i\in\Xi_s$,  then there exists $m\in\{1,\ldots,l\}$ 
such that $\Pi_m$ contains some $j$ of the points
$\varphi_s(z_1),\ldots,\varphi_s(z_i)$.
Since $\HH^{d-1}(\Pi_m)\ll \varepsilon k^{-(j+1)}$, we have
\begin{eqnarray}
\nonumber
&&\PP_{\tilde{\varrho}_{s}}
\left(\Phi_s\tilde{x}_s|L_0\not\in
[\varphi_s(z_1),\ldots,\varphi_s(z_i)]|L_0 \mbox{ and }
\Psi_s\tilde{w}_s\in
[\psi_s(z_1),\ldots,\psi_s(z_i)]|L_0\right)\\
&&\leq 
\binom{i }{ j}\sum_{m=1}^l\PP_{\tilde{\varrho}_{s}}
\left(\varphi_s(z_1),\ldots,\varphi_s(z_j)\in\Pi_m\right)\nonumber\\
&&\ll \binom{i }{ j}\cdot l\cdot (\varepsilon k^{-(j+1)})^j \ll 
\frac{\varepsilon}{k}.\label{KnoB}
\end{eqnarray}
Similarly, we have
\begin{equation}
\label{BnoK}
\PP_{\tilde{\sigma}_{s}}
\left(\Psi_s\tilde{w}_s\not\in
[\psi_s(z_1),\ldots,\psi_s(z_i)]|L_0 \mbox{ and }
\Phi_s\tilde{x}_s|L_0\in
[\varphi_s(z_1),\ldots,\varphi_s(z_i)]|L_0\right)\ll
\frac{\varepsilon}{k}.
\end{equation}
Combining \eqref{KXi}, \eqref{BXi} as well as \eqref{noKB}, \eqref{KnoB} and \eqref{BnoK}
yields \eqref{Kballeps0}, and in turn Lemma~\ref{compareball}
if $d\geq 3$.

If $d=2$, then a similar argument works, only some of the constrains
should be modified as follows.
In \eqref{Artin}, we only have
${\beta^{\frac{-2}{d-1}}\Gamma\left(i+\frac2{d-1}\right)}/{i!}<k+1$,
and hence in \eqref{Kballeps0}, we should verify
an upper bound  of order $\frac{\varepsilon}{k^2}$, not
of order $\frac{\varepsilon}{k}$. Therefore
the upper bound in \eqref{noKB} should be $\frac{\varepsilon}{k^2}$.
\end{proof}

\section{Completing the proof of Theorem~\ref{main}}

In order to transfer an integral over an average of
projections of a convex body
to a boundary integral, we are going to use the following
lemma from
K. B\"or\"oczky Jr., L. M. Hoffmann, D. Hug \cite{BoHoHu2008}. 

For $L\in {\mathcal L}_j^d$ and $y\in\partial (K|L)$, we choose a point 
$x(y)\in \partial K$ such that $y=x(y)|L$. In general, $x(y)$ is not 
uniquely determined, but we can fix a measurable choice 
(cf.\ \cite[p.\ 152]{BoHoHu2008}). Recall, however, that $x(y)$ is 
uniquely determined for $\nu_j$ a.e.\ $L\in {\mathcal L}_j^d$ and 
$\mathcal{H}^{j-1}$ a.e.\ $y\in\partial (K|L)$. 

\begin{lemma}
\label{curvature}
Let $K\subset\R^d$ be a convex body in which a ball rolls freely,
 let $f:\partial K\to [0,\infty)$ be nonnegative and measurable,
 and let $j\in\{1,\ldots,d-1\}$. Then
$$
\frac{j\alpha_j}{d\alpha_d}\int_{\partial K}f(x)H_{d-j}(x)\,
\mathcal{H}^{d-1}(dx)=
\int_{\mathcal{L}^d_j}\int_{\partial(K|L)}f(x(y))\,
\mathcal{H}^{j-1}(dy)\,\nu_j(dL).
$$
\end{lemma}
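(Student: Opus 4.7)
My plan is to establish the identity by a change-of-variables/projection argument, reducing the right-hand side to an integral over $\partial K$ and then identifying the emerging curvature factor via Haar-measure invariance on the Grassmannian.

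First I would restrict attention to the set of normal boundary points of $K$ (a set of full $\mathcal{H}^{d-1}$-measure), on which the outer unit normal $u(x)$ and the generalized principal curvatures $k_1(x),\ldots,k_{d-1}(x)$ are all well defined. For each fixed $L\in\mathcal{L}^d_j$, a normal boundary point $x\in\partial K$ projects to the relative boundary $\partial(K|L)$ if and only if $u(x)\in L$. Outside a set of $\mathcal{H}^{j-1}$-measure zero in $\partial(K|L)$, the preimage $x(y)$ is unique, so the projection restricted to the silhouette $\Sigma_L:=\{x\in\partial K: u(x)\in L\}$ is essentially a bijection onto $\partial(K|L)$. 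A straightforward Gauss-map argument shows that $\Sigma_L$ is $(j-1)$-rectifiable.

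Next, I would apply the area formula to the projection $\pi_L:\Sigma_L\to\partial(K|L)$. This yields
\[
\int_{\partial(K|L)}f(x(y))\,\mathcal{H}^{j-1}(dy)=\int_{\Sigma_L}f(x)\,J(x,L)\,\mathcal{H}^{j-1}(dx),
\]
where $J(x,L)$ is the $(j-1)$-dimensional Jacobian of $\pi_L$ restricted to the tangent space of $\Sigma_L$ at $x$. Introducing principal coordinates at $x$, this Jacobian is expressible as a symmetric function of those $k_i(x)$ corresponding to directions transverse to $L$. Integrating over $L$ and swapping the order of integration by Fubini gives
\[
\int_{\mathcal{L}^d_j}\int_{\partial(K|L)}f(x(y))\,\mathcal{H}^{j-1}(dy)\,\nu_j(dL)=\int_{\partial K}f(x)\left(\int_{\{L\in\mathcal{L}^d_j:\,u(x)\in L\}}J(x,L)\,\tilde{\nu}(dL)\right)\mathcal{H}^{d-1}(dx),
\]
where $\tilde{\nu}$ is the induced measure on $\{L:u(x)\in L\}$ coming from disintegrating $\nu_j$ over the condition $u(x)\in L$.

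The key step is then to identify the inner integral as $\tfrac{j\alpha_j}{d\alpha_d}H_{d-j}(x)$. The set $\{L\in\mathcal{L}^d_j:u(x)\in L\}$ is in bijection with the Grassmannian $\mathcal{L}^{d-1}_{j-1}(u(x)^\perp)$ of $(j-1)$-dimensional subspaces of $u(x)^\perp$, and the Jacobian $J(x,L)$ depends only on how $L\cap u(x)^\perp$ sits with respect to the principal directions of the second fundamental form $Q$ at $x$. By the rotational invariance of the Haar measure, integrating the symmetric Jacobian against $\tilde{\nu}$ averages products of $(d-1)-(j-1)=d-j$ principal curvatures, producing a constant multiple of the $(d-j)$-th normalized elementary symmetric function of $k_1(x),\ldots,k_{d-1}(x)$, which is precisely $H_{d-j}(x)$. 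The constant can be pinned down by testing against $f\equiv 1$ on the unit ball $B^d$: both sides reduce to explicit expressions involving $j\alpha_j$ and $d\alpha_d$ (the surface contents of $B^j$ and $B^d$) and the classical Kubota formula for $V_{j-1}(B^d)$.

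The main obstacle is the precise Jacobian calculation together with the Haar-measure averaging over $\{L:u(x)\in L\}$: one must keep track of how $\tilde{\nu}$ arises from disintegrating $\nu_j$ so that the constant in front of $H_{d-j}(x)$ comes out correctly. Since the identity is stated as a lemma taken from \cite{BoHoHu2008}, the normalization check against the ball is a clean way to fix the constant without rederiving the full integral-geometric apparatus.
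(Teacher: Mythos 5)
In the paper itself, Lemma~\ref{curvature} is simply cited from \cite{BoHoHu2008} without proof, so there is no in-paper argument to compare against; I can only evaluate your sketch on its own terms. The overall plan --- parametrize the right side by the silhouette $\Sigma_L=\{x\in\partial K:u(x)\in L\}$, apply the area formula, disintegrate to a boundary integral over $\partial K$, and average the projection Jacobian over the pencil of $j$-planes through $u(x)$ --- has the right shape, but the three steps that carry the actual mathematical content are left undischarged. The assertion that $\Sigma_L$ is $(j-1)$-rectifiable is not a straightforward consequence of the Gauss map $u$ being Lipschitz: with only an inner rolling ball and no lower curvature bound, $u$ need not be bi-Lipschitz, and preimages of $(j-1)$-spheres under $u$ can be large wherever the Weingarten map degenerates. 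Already for $j=d-1$ the rectifiability of shadow boundaries for a.e.\ direction is a nontrivial theorem in its own right, not a routine application of the area formula, and one would need such a statement for $\nu_j$-a.e.\ $L$ before the area formula can even be invoked. Moreover, the interchange you call Fubini is in fact a coarea-type disintegration over the incidence set $\{(x,L):u(x)\in L\}$, and it carries its own Jacobian density; hence the measure you denote $\tilde\nu$ is not simply the invariant probability measure on the sub-Grassmannian of $j$-planes containing $u(x)$, and the missing density is exactly where the normalization $j\alpha_j/(d\alpha_d)$ and the curvature weight are produced.

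Most seriously, the central claim --- that the average of the projection Jacobian $J(x,L)$ over $\{L:u(x)\in L\}$ is a constant multiple of $H_{d-j}(x)$ --- is stated rather than derived. Orthogonal invariance of the averaging shows only that the result is a symmetric function of $k_1(x),\ldots,k_{d-1}(x)$; it does not by itself show that this function is polynomial, that it is homogeneous of degree $d-j$, or that it is proportional to the $(d-j)$-th elementary symmetric function rather than some other symmetric combination. At a point of positive curvature, writing $W$ for the Weingarten map and $V=L\cap u(x)^\perp$, one finds $J(x,L)=|\det((W^{-1})_V)|/\sqrt{\det((W^{-2})_V)}$, where $(\cdot)_V$ denotes compression to $V$; this is far from a polynomial in the $k_i$, and only after folding in the coarea density and integrating over $V$ can the expression collapse to a multiple of $H_{d-j}(x)$. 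That computation is the heart of the lemma. Fixing the constant by testing $f\equiv1$ on the ball is a legitimate final step, but it cannot substitute for the derivation of the $H_{d-j}$-dependence itself.
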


By the very special case $K=B^d$ of \eqref{reitzner}, 
due to M.\ Reitzner  \cite{Reitz2002}, we have
$$
\lim_{n\to\infty}n^{\frac{2}{d-1}}\left[V_j(B^d)-\E_\sigma V_j(B_n)\right]=
c^{(j, d)}(d\alpha_d)^{\frac{d+1}{d-1}}.
$$
Therefore the rotational symmetry of $B^d$,
\eqref{limitform} and \eqref{Js0} yield
\begin{eqnarray}
\label{BallJ}
c^{(j, d)}(d\alpha_d)^{\frac{d+1}{d-1}}\nonumber&=&
\frac{\binom{d}{ j}\alpha_d}{\alpha_{d-j}\alpha_j}
\cdot\frac{j\alpha_j(d\alpha_d)^{\frac{2}{d-1}}}{d-1}(\alpha_{d-1})^{-\frac{2}{d-1}}\\
&&\qquad\times 
\lim_{n\to\infty}\int_0^{n^{-1/2}}n^{\frac2{d-1}}
\PP_\sigma\left(\tilde{w}_s\not\in B_n|L_0 \right)s^{-\frac{d-3}{d-1}}\,ds.
\end{eqnarray}

We can now transform the asymptotic formulas to $K$.
Let $L\in {\mathcal L}_j^d$ and let $y\in\partial (K|L)$ be 
such that $y=x|L$ for some normal boundary point $x=x(y)\in \partial K$.
If $H_{d-1}(x)=0$, then  $J_\varrho(y, L)=0$ by Lemma~\ref{zerocurv}.
If $H_{d-1}(x)>0$, then
it follows from  \eqref{Js0}, Lemma~\ref{compareball} and \eqref{BallJ} that
\begin{eqnarray*}
 J_\varrho(y,L)&=& (d-1)^{-1}(\alpha_{d-1})^{-\frac{2}{d-1}} 
\varrho(x)^{\frac{-2}{d-1}}H_{d-1}(x)^{\frac{1}{d-1}}\\
&&\qquad \times \lim_{n\to\infty}\int_0^{n^{-1/2}}n^{\frac2{d-1}}
\PP_\sigma\left(\tilde{w}_s\not\in B_n|L_0 \right)s^{-\frac{d-3}{d-1}}\,ds\\
&=&c^{(j, d)}\varrho(x)^{\frac{-2}{d-1}}H_{d-1}(x)^{\frac{1}{d-1}}\left(
\frac{\binom{d}{ j}\alpha_d}{\alpha_{d-j}\alpha_j}\cdot
\frac{j\alpha_j}{d\alpha_d}\right)^{-1},
\end{eqnarray*}
where $x=x(y)$. 
Finally, we apply first \eqref{limitform}, and afterwards Lemma~\ref{curvature}, 
to deduce
\begin{eqnarray*}
&& \lim_{n\to\infty}n^{\frac{2}{d-1}}\left[V_j(K)-\E_\varrho(V_j(K_n))\right]\\
&&=
c^{(j, d)}\,\frac{d\alpha_d}{j\alpha_j}
\int_{{\mathcal L}_j^d}
\int_{\partial (K| L)} \varrho(x(y))^{\frac{-2}{d-1}}H_{d-1}(x(y))^{\frac{1}{d-1}}\, 
\HH^{j-1}(dy)\,\nu_j(dL)\\
&&= c^{(j, d)}
\int_{\partial K}\varrho(x)^{\frac{-2}{d-1}}H_{d-1}(x)^{\frac{1}{d-1}}\, 
H_{d-j}(x)\,\mathcal{H}^{d-1}(dx),
\end{eqnarray*}
which concludes the proof of Theorem~\ref{main}.

\section{Proof of Theorem~\ref{meanwidthbounds}}

Using the Stirling formula $\Gamma(n+1)\sim(\frac{n}e)^n\sqrt{2\pi n}$, 
as $n\to\infty$ (see E. Artin \cite{Art64}),
for any $\alpha>0$ and $\gamma\in(0,1]$, we deduce
\begin{eqnarray}
\label{Gammalimit}
\lim_{n\to\infty}n^{\alpha}\int_0^\gamma s^{\alpha-1} (1-s)^n\,ds
&=&
\lim_{n\to\infty}n^{\alpha}\int_0^1 s^{\alpha-1} (1-s)^n\,ds\nonumber\\
&=&\lim_{n\to\infty}n^{\alpha}\frac{\Gamma(\alpha)\Gamma(n+1)}{\Gamma(n+1+\alpha)}=
\Gamma(\alpha).
\end{eqnarray}
In the following argument, $\gamma_1,\gamma_2,\ldots$ again 
denote positive constants that may depend on $K$ and $\varrho$. 
We can assume that $o\in \text{int}(K)$. Further, let 
$(\partial K)^n_*$ denote the set of all $x_1,\ldots,x_n\in\partial K$ 
such that $o\in[x_1,\ldots,x_n]$. 
For $u\in S^{d-1}$ and $t\geq 0$, let
$$
C(u,t):=\{x\in K:\,\langle x,u\rangle\geq h_K(u)-t\},
$$
where $h_K$ denotes the support function of $K$. 
To deduce the upper bound, we start with the estimates
\begin{eqnarray}
&&\E_\varrho (V_1(K)-V_1(K_{n}))\nonumber\\
&&=\frac{1}{\alpha_{d-1}}\int_{(\partial K)^n}\int_{S^{d-1}} (h_K(u)-h_{K_n}(u))\, 
\HH^{d-1}(du)\varrho(x_1)\cdots\varrho(x_{n})\, 
\HH^{d-1}(dx_1)\ldots \HH^{d-1}(dx_n)\nonumber\\
&&\le\frac{1}{\alpha_{d-1}}\int_{(\partial K)^n_*}\int_{S^{d-1}} (h_K(u)-h_{K_n}(u))\, 
\HH^{d-1}(du)\varrho(x_1)\cdots\varrho(x_{n})\, 
\HH^{d-1}(dx_1)\ldots \HH^{d-1}(dx_n)\nonumber\\
&&\qquad\qquad +\,2^d(1-\gamma_1)^n\nonumber\\
&&\le\frac{1}{\alpha_{d-1}}\int_{S^{d-1}} \int_0^{h_K(u)}\int_{(\partial K)^n}\mathbf{1}
\{x_1,\ldots,x_n\in\partial K\setminus C(u,s)\}
\varrho(x_1)\cdots\varrho(x_{n})\nonumber\\
&&\qquad\qquad 
\,\HH^{d-1}(dx_1)\ldots \HH^{d-1}(dx_n)\, ds\, \HH^{d-1}(du) 
+2^d(1-\gamma_1)^n\nonumber\\
&&\le\frac{1}{\alpha_{d-1}}\int_{S^{d-1}} \int_0^{h_K(u)}
\left(1-\int_{\partial K\cap C(u,t)}\varrho(x)\,\HH^{d-1}(dx)\right)^n\,dt
\, \HH^{d-1}(du) 
+2^d(1-\gamma_1)^n.
\label{Cut}
\end{eqnarray}

For suitable positive constants $\gamma_2,\gamma_3,\gamma_4$ we get, 
for $u\in S^{d-1}$ and $t\in(0,\gamma_2)$,
\begin{equation}
\label{lowerloc}
\int_{\partial K\cap C(u,t)}\varrho(x)\,\HH^{d-1}(dx)\;
\left\{
\begin{array}{lcl}
> &\gamma_3t^{d-1},&\mbox{ if $t\in(0,\gamma_2)$},\\[0.5ex]
> &\gamma_4, &\mbox{ if $t\geq\gamma_2$}.
\end{array}
\right.
\end{equation}
In particular, $\gamma_4,\gamma_3(\gamma_2)^{d-1}\in(0,1)$.
We deduce from (\ref{Cut}), \eqref{lowerloc} and \eqref{Gammalimit} 
that, for suitable
$\gamma_5,\ldots,\gamma_9$ with $\gamma_7,\gamma_9\in(0,1)$, 
\begin{eqnarray*}
\E_\varrho (V_1(K)-V_1(K_{n}))&\leq&
\gamma_5\int_0^{\gamma_2} (1-\gamma_3t^{d-1})^n\,dt+\gamma_6\gamma_7^n\\
&=&\gamma_8\int_0^{\gamma_9} s^{\frac1{d-1}-1} \cdot(1-s)^n\,ds+\gamma_6\gamma_7^n
\leq\gamma_{10}n^{\frac{-1}{d-1}}.
\end{eqnarray*}

To prove the lower bound for $\E_\varrho (V_1(K)-V_1(K_{n}))$,
we need the following observation.

\begin{lemma}
\label{circumR}
Let $K\subset\R^d$ be a convex body, and let $h_K$ be twice differentiable 
at $u_0\in S^{d-1}$. Then there is some $R>0$ such that $K\subset x_0-Ru_0+RB^d$, 
where $x_0=\nabla h_K(u_0)\in\partial K$. In particular, 
there exist a measurable set $\Sigma\subset S^{d-1}$ with $\HH^{d-1}(\Sigma)>0$ and 
some $R>0$,
all depending on $K$, such that
for any $u\in \Sigma$ there is some $x\in \partial K$such that $K\subset x-Ru+RB^d$.
\end{lemma}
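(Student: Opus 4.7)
My plan is to reduce the containment $K\subset x_0-Ru_0+RB^d$ to the support-function inequality
\[
h_K(u)\le\langle x_0,u\rangle+R(1-\langle u_0,u\rangle)\qquad\text{for all }u\in S^{d-1},
\]
(since $h_{x_0-Ru_0+RB^d}(u)=\langle x_0,u\rangle-R\langle u_0,u\rangle+R$), and then to verify this inequality by splitting $S^{d-1}$ into a small spherical neighbourhood $U$ of $u_0$ and its complement. On $U$ I would use the second-order expansion coming from twice differentiability of $h_K$ at $u_0$: writing $H$ for the Hessian and using $x_0=\nabla h_K(u_0)\in\partial K$ together with $Hu_0=0$ (forced by positive $1$-homogeneity), one obtains
\[
h_K(u)-\langle x_0,u\rangle=\tfrac12\langle u-u_0,H(u-u_0)\rangle+o(\|u-u_0\|^2),
\]
while $1-\langle u_0,u\rangle=\tfrac12\|u-u_0\|^2$ on $S^{d-1}$. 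Hence any $R$ strictly greater than the largest eigenvalue of $H$ restricted to $u_0^{\perp}$ makes the inequality hold on some open $U\ni u_0$.

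On the compact complement $S^{d-1}\setminus U$, the left-hand side is bounded above by a constant depending only on $K$, while $1-\langle u_0,u\rangle$ is bounded below by some positive $\eta(U)$, so enlarging $R$ further makes the inequality hold globally. Combining the two estimates shows that
\[
R(u_0):=\inf\{R>0:K\subset\nabla h_K(u_0)-Ru_0+RB^d\}
\]
is finite at every $u_0$ where $h_K$ is twice differentiable, which proves the first assertion of the lemma.

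For the "in particular" part, Alexandrov's theorem applied to $h_K$ on $\R^d$ produces a full-$\HH^{d-1}$-measure set $\Omega\subset S^{d-1}$ on which $h_K$ is twice differentiable; the function $R:\Omega\to[0,\infty)$ constructed above is then measurable, which I would check by expressing $R(u_0)$ as a countable supremum (over a dense set of test directions $u$ bounded away from $u_0$) of ratios continuous in $u_0$ on the set where $\nabla h_K$ exists. Since $\Omega=\bigcup_{n\in\N}\{R\le n\}$, some level set $\Sigma:=\{u_0\in\Omega:R(u_0)\le R_0\}$ has positive $\HH^{d-1}$-measure, and taking $x=\nabla h_K(u)\in\partial K$ for $u\in\Sigma$ yields the desired uniform containment with common constant $R_0$. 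The step I expect to demand most care is the measurability, since the defining ratio is singular as $u\to u_0$; however, the local second-order estimate already handles a full neighbourhood of $u_0$, so the supremum in the definition of $R(u_0)$ can safely be taken over $u$ outside any fixed spherical cap about $u_0$, eliminating the singularity.
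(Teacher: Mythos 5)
Your proof is correct and follows essentially the same strategy as the paper: both reduce the containment to the support-function inequality $h_K(u)\le\langle x_0,u\rangle+R(1-\langle u_0,u\rangle)$, handle a small cap around $u_0$ via the second-order Taylor expansion of $h_K$ (the paper states it as $h(u)\le R_1\|u-u_0\|^2$ for $\langle u,u_0\rangle\ge 1-\delta$, using the identity $1-\langle u_0,u\rangle=\tfrac12\|u-u_0\|^2$ implicitly), and then enlarge $R$ to cover the compact complement where $1-\langle u_0,u\rangle$ is bounded away from zero. For the ``in particular'' clause the paper merely asserts it ``follows immediately,'' while you supply the missing detail: Alexandrov's theorem gives a full-measure set of twice-differentiability points, and the measurability of the resulting function $R(\cdot)$ (checked by restricting the defining supremum to directions bounded away from $u_0$, using measurability of $\nabla h_K$) guarantees that some sublevel set $\{R\le R_0\}$ has positive $\HH^{d-1}$-measure. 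This extra care is a genuine improvement over the paper's terse treatment, but the underlying route is the same.
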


\begin{proof}
For the proof of the first assertion, we may assume that  $x_0=o$, 
hence also $h_K(u_0)=0$. We put $h:=h_K$. By 
assumption, there is a function $R:\R_+\to[0,\infty)$ with $\lim_{t\to 0^+}R(t)=0$ and
$$
\left|h(u)-\frac{1}{2}\cdot d^2h(u-u_0,u-u_0)\right|\le R(\|u-u_0\|)\|u-u_0\|^2.
$$ 
Thus there is a constant $R_1>0$ and $\delta>0$ such that $h(u)\le R_1\|u-u_0\|^2$ 
for all $u\in S^{d-1}$ with $\langle u,u_0\rangle \ge  1-\delta$. But then for 
$R_2:=\max\{2R_1,\max\{h(u):u\in S^{d-1}\}/(2\delta)\}$ and all $u\in S^{d-1}$, 
we obtain
$$
h(u)\le R_2\left(1-\langle u_0,u\rangle\right)=h(-R_2u_0+R_2B^d,u),
$$
that is $K\subset -R_2u_0+R_2B^d$.

The second assertion follows immediately from the first assertion.
\end{proof}

Let $t_0$ be the inradius of $K$. Now Lemma~\ref{circumR} yields, for $u\in\Sigma$ and $t\in(0,t_0)$, that 
$$
\int_{\partial K\cap C(u,t)}\varrho(x)\,\HH^{d-1}(dx)<\gamma_{11}
\cdot t^{\frac{d-1}2}.
$$
Choosing a constant $\gamma_{12}\in(0,t_0)$ satisfying
$\gamma_{11}(\gamma_{12})^{\frac{d-1}2}<1$, it follows as in the derivation of 
 (\ref{Cut}) that, with a suitable constant $\gamma_{13}\in(0,1)$, we have
\begin{eqnarray*}
\E_\varrho (V_1(K)-V_1(K_{n}))&\geq&\frac{1}{\alpha_{d-1}}\int_{\Sigma}
\int_0^{\gamma_{12}}\left(1-\gamma_{11}t^{\frac{d-1}2}\right)^n\,dt\,\HH^{d-1}(dx)\\
&=&\int_0^{\gamma_{13}} s^{\frac2{d-1}-1}\cdot (1-s)^n\,ds>
\gamma_{14}\cdot n^{\frac{-2}{d-1}}.
\end{eqnarray*}

Theorem~\ref{main} shows that the lower bound of Lemma~\ref{meanwidthbounds}
is of optimal order if $K$ has a rolling ball. In fact, the assumption of a rolling 
ball ensures that the integral on the right side of \eqref{reitzner} is positive. 
This follows, for instance, from the absolute continuity of the Gauss curvature measure 
of a convex body which has a rolling ball (cf.\ \cite{Hug99}).  

On the other hand, the upper bound for  
$\E_\varrho (V_1(K)-V_1(K_{n}))$ is of optimal order
if $K$ is a polytope. To explain this, let $\Sigma_0\subset S^{n-1}$
be contained in the interior of the exterior normal cone
of one of the vertices of $K$ and such that $\HH^{d-1}(\Sigma_0)>0$.
In this case
$$
\int_{\partial K\cap C(u,t)}\varrho(x)\,\HH^{d-1}(dx)<\gamma_{15}\cdot t^{d-1},
$$
for $u\in\Sigma_0$ and $t\in(0,\gamma_{16})$, 
and hence $\E_\varrho (V_1(K)-V_1(K_{n}))\geq \gamma_{17}\cdot n^{\frac{-1}{d-1}}$.

%    Text of article.

%    Bibliographies can be prepared with BibTeX using amsplain,
%    amsalpha, or (for "historical" overviews) natbib style.

\bibliographystyle{amsplain}

\end{document}